\newtheorem{thm}[subsection]{Theorem}
\newtheorem{cor}[subsection]{Corollary}
\newtheorem{lemma}[subsection]{Lemma}
\newtheorem{remark}[subsection]{Remark}
\theoremstyle{definition}
\numberwithin{equation}{section}
\def\phi{{\varphi}}
\def\ra{\rightarrow}
\def\cA{{\mathcal A}}
\def\cH{{\mathcal H}}
\def\cN{{\mathcal N}}
\def\cO{{\mathcal O}}
\def\cS{{\mathcal S}}
\def\cU{{\mathcal U}}
\def\cV{{\mathcal V}}
\def\cW{{\mathcal W}}
\def\gl{{\mathfrak l}}
\def\gp{{\mathfrak p}}
\def\gs{{\mathfrak s}}
\newfont{\german}{eufm10}
\begin{document}
\pagestyle{plain}

\title
{Singular support of a vertex algebra and the arc space of its associated scheme}

\author{Tomoyuki Arakawa}
\address{Research Institute for Mathematical Sciences, Kyoto University}
\email{arakawa@kurims.kyoto-u.ac.jp}
\thanks{T. A. is supported by JSPS KAKENHI Grants
\#17H01086  and \#17K18724}

\author{Andrew R. Linshaw} 
\address{University of Denver}
\email{andrew.linshaw@du.edu}
\thanks{A. L. is supported by Simons Foundation Grant \#318755}

\thanks{We thank Julien Sebag for helpful comments on an earlier draft of this paper.}

\maketitle

\begin{center}
{\small {\em Dedicated to Professor Anthony Joseph on his seventy-fifth birthday}}
\end{center}

\begin{abstract} Attached to a vertex algebra $\cV$ are two geometric objects. The {\it associated scheme} of $\cV$ is the spectrum of Zhu's Poisson algebra $R_{\cV}$. The {\it singular support} of $\cV$ is the spectrum of the associated graded algebra $\text{gr}(\cV)$ with respect to Li's canonical decreasing filtration. There is a closed embedding  from the singular support to the arc space of the associated scheme, which is an isomorphism in many interesting cases. In this note we give an example of a non-quasi-lisse vertex algebra whose associated scheme is reduced, for which the isomorphism is not true as schemes but true as varieties.
\end{abstract}


\section{Introduction}
Attached to a vertex algebra $\cV$ are two geometric objects. The {\it associated scheme} 
$\tilde{X}_{\cV}$ of $\cV$ is the spectrum of commutative algebra $R_{\cV}$,
which is 
an
affine Poisson scheme of finite type\footnote{provided that $\cV$ is finitely strongly generated}.
The {\it singular support} 
$\text{SS}(\cV)$
of $\cV$ is the spectrum of the associated graded algebra $\text{gr}(\cV)$ with respect to Li's canonical decreasing filtration,
which is 
a {\em vertex Poisson} scheme of infinite type\footnote{unless $\cV$ is finite-dimensional}.
There is a closed embedding 
$$\Phi:\text{SS}(\cV)\hookrightarrow (\tilde{X}_{\cV})_\infty$$
from the singular support to the arc space $ \tilde{X}_\infty$ of the associated scheme, 
which is an isomorphism in many interesting cases.

Originally
Zhu \cite{Zh} introduced the algebra $R_{\cV}$ to define a certain finiteness condition on a vertex algebra.
Recall that a vertex algebra $\cV$ is called {\em lisse} (or $C_2$-cofinite) if 
$\dim \tilde{X}_{\cV}=0$.
Using the map $\Phi$ one can show that this condition is equivalent to that 
$\dim \text{SS}(\cV)=0$,
and hence,
the lisse condition is a natural finiteness condition (\cite{ArI}).
It is known that lisse vertex (operator) algebras have many nice properties,
such as modular invariance property of characters (\cite{Zh,Mi}),
and this condition has been assumed in many significant theories of vertex (operator) algebras.
However, 
recently 
non-lisse 
vertex algebras 
have caught a lot of  attention  
due to the {\em Higgs branch conjecture} by Beem and Rastelli \cite{BR},
which states that 
 the reduced scheme $X_{\cV}$ of $\tilde{X}_{\cV}$
 should be isomorphic to the Higgs branch of a four-dimensional $N=2$ superconformal field theory
 $\mathcal{T}$ if $\cV$ obtained from $\mathcal{T}$ by the correspondence discovered by \cite{BLL+},
 see the survey articles \cite{ArII, ArIII} and the references therein.

It is natural to ask whether the map $\Phi$ is always an isomorphism,
and if not, whether $\Phi$ defines an isomorphism as varieties.
Very recently counterexamples to the first question were found by 
van Ekeren and  Heluani \cite{EH}
in the case that $\cV$ is lisse in their study of chiral homology of elliptic curves. It was also shown recently in \cite{AMII} that the map $\Phi$ defines an isomorphism as varieties 
if $\cV$ is {\em quasi-lisse}, that is, the Poisson variety $X_{\cV}$ has finitely many symplectic leaves. In this note we give an example
of a non-quasi-lisse
vertex algebra whose associated scheme is reduced, for which $\Phi$ is not an isomorphism of schemes, but still defines an isomorphism of varieties. We remark that by tensoring one of the lisse examples in \cite{EH} with any non-quasi-lisse vertex algebra, one can trivially obtain a non-quasi-lisse example. However, all such examples have the property that the associated scheme is nonreduced.
\section{Vertex algebras} \label{sect:va}
We assume that the reader is familiar with vertex algebras, which have been discussed from various points of view in the literature \cite{B,FLM,K,FBZ}. Given an element $a$ in a vertex algebra $\cV$, the field associated to $a$ via the state-field correspondence is denoted by $$a(z) = \sum_{n\in \mathbb{Z}} a(n) z^{-n-1} \in \text{End}(\cV)[[z,z^{-1}]].$$ Throughout this paper, we shall identify $\cV$ with the corresponding space of fields. Given $a,b \in \cV$, the {\it operators product expansion} (OPE) formula is given by
$$a(z)b(w)\sim\sum_{n\geq 0}(a_{(n)} b)(w)\ (z-w)^{-n-1}.$$ Here $(a_{(n)} b)(w) = \text{Res}_z [a(z), b(w)](z-w)^n$ where $$[a(z), b(w)] = a(z) b(w)\ -\ (-1)^{|a||b|} b(w)a(z),$$ and $\sim$ means equal modulo terms which are regular at $z=w$. The {\it normally ordered product} $:a(z)b(z):$ is defined to be $$a(z)_-b(z)\ +\ (-1)^{|a||b|} b(z)a(z)_+,$$ where $$a(z)_-=\sum_{n<0}a(n)z^{-n-1},\qquad a(z)_+=\sum_{n\geq
0}a(n)z^{-n-1}.$$ We usually omit the formal variable $z$ and write $:a(z)b(z): \ = \ :ab:$, when no confusion can arise. For $a_1,\dots, a_k \in \cV$, the iterated normally ordered product is defined inductively by
\begin{equation} \label{iteratedwick} :a_1 a_2 \cdots a_k:\ =\ :a_1 \big(  :a_2\cdots a_k: \big).\end{equation}
A subset $S=\{a_i|\ i\in I\}$ of $\cV$  is said to {\it strongly generate} $\cV$, if $\cV$ is spanned by the set of normally ordered monomials $$:\partial^{k_1} a_{i_1}\cdots \partial^{k_m} a_{i_m}:,\qquad i_1,\dots,i_m \in I, \qquad  k_1,\dots,k_m \geq 0.$$ If $S$ is an ordered strong generating set $\{\alpha^1, \alpha^2,\dots\}$, we say that $S$ {\it freely generates} $\cV$, if $\cV$ has a PBW basis consisting of 

\begin{equation} \label{freegen} \begin{split} & :\partial^{k^1_1} \alpha^{i_1} \cdots \partial^{k^1_{r_1}}\alpha^{i_1} \partial^{k^2_1} \alpha^{i_2} \cdots \partial^{k^2_{r_2}}\alpha^{i_2}
 \cdots \partial^{k^n_1} \alpha^{i_n} \cdots \partial^{k^n_{r_n}} \alpha^{i_n}:,\qquad 
 1\leq i_1 < \dots < i_n,
 \\ & k^1_1\geq k^1_2\geq \cdots \geq k^1_{r_1},\qquad k^2_1\geq k^2_2\geq \cdots \geq k^2_{r_2},  \ \ \cdots,\ \  k^n_1\geq k^n_2\geq \cdots \geq k^n_{r_n},
 \\ &  k^{t}_1 > k^t_2 > \dots > k^t_{r_t}\ \  \text{whenever} \ \ \alpha^{i_t}\ \ \text{is odd}. 
 \end{split} \end{equation} 
In particular, the monomials \eqref{freegen} are linearly independent, so there are no nontrivial normally ordered polynomial relations among the generators and their derivatives.

\subsection{$\beta\gamma$-system}
The $\beta\gamma$-system $\cS$ is freely generated by even fields $\beta,\gamma$ satisfying
\begin{equation}
\begin{split}
\beta (z)\gamma (w) &\sim (z-w)^{-1},\qquad \gamma (z)\beta (w)\sim - (z-w)^{-1},\\ 
\beta (z)\beta (w) &\sim 0,\qquad\qquad\qquad \gamma (z)\gamma (w)\sim 0.\end{split} \end{equation}
It has Virasoro element $L^{\cS} = \frac{1}{2} \big(:\beta \partial\gamma : - :\partial\beta \gamma :\big)$ of central charge $c=-1$, under which $\beta$, $\gamma$ are primary of weight $\frac{1}{2}$.

\subsection{$\cW_3$-algebra}
The $\cW_3$-algebra $\cW^c_3$ with central charge $c$ was introduced by Zamolodchikov \cite{Za}. It is an extension of the Virasoro algebra, and is freely generated by a Virasoro field $L$ and an even weight $3$ primary field $W$. In fact, $\cW^c_3$ is isomorphic to the principal $\cW$-algebra $\cW^k(\gs\gl_3, f_{\text{prin}})$ where $c = 2 - \frac{24(k+2)^2}{k+3}$. For generic values of $c$, $\cW_3^c$ is simple, but for certain special values it has a nontrivial ideal. In this paper, we only need the case $c=-2$, which is nongeneric. We shall denote the simple graded quotient of $\cW^{-2}_3$ by $\cW$ for the rest of the paper. Since $\cW_3^{-2}$ has a nontrivial ideal, $\cW$ is strongly but not freely generated by $L,W$. 

There is a useful embedding $i:\cW \ra \cS$ due to Wang \cite{WaI}, given by
\begin{equation} \label{eq:embeddingW} \begin{split} & L \mapsto  \frac{1}{2} : \beta\beta\gamma\gamma: + :\beta (\partial \gamma): - :(\partial \beta) \gamma:, \\ & W \mapsto \frac{1}{4 \sqrt{2}} \bigg( 2 :\beta^3 \gamma^3: + 9 : \beta^2 (\partial \gamma) \gamma: + 3 :\beta \partial^2 \gamma: - 9:(\partial \beta) \beta \gamma^2: - 12 \partial \beta)(\partial \gamma): +3 :(\partial^2 \beta) \gamma: \bigg),\end{split}\end{equation} and we shall identify $\cW$ with its image in $\cS$. In fact, $\cW$ is precisely the subalgebra of $\cS$ that commutes with the Heisenberg algebra generated by $:\beta\gamma:$. Note that $W$ is normalized so that it satisfies 
$$W(z) W(w) \sim -\frac{9}{8} (z-w)^{-6} + \frac{27}{8} L(w) (z-w)^{-4} +  \frac{27}{16} \partial L(w) (z-w)^{-3} $$ $$ +\bigg(\frac{9}{2} :LL: - \frac{27}{32} \partial^2 L  \bigg)(w)(z-w)^{-2} +\bigg(\frac{9}{2} :(\partial L)L: - \frac{3}{16} \partial^3 L \bigg)(w) (z-w)^{-1}.$$
This normalization is nonstandard but convenient for our purposes.

\subsection{Zhu's commutative algebra and the associated variety}
Given a vertex algebra $\cV$, define \begin{equation} \label{def:zhucomm} C(\cV) = \text{Span}\{a_{(-2)} b|\ a,b\in \cV \},\qquad R_{\cV} = \cV / C(\cV).\end{equation} It is well known that $R_{\cV}$ is a commutative, associative algebra with product induced by the normally ordered product \cite{Zh}. Also, if $\cV$ is graded by conformal weight, $R_{\cV}$ inherits this grading. Define the associated scheme \begin{equation} \tilde{X}_{\cV} = \text{Spec}(R_{\cV}),\end{equation} and the associated variety \begin{equation} X_{\cV} = \text{Specm}(R_{\cV}) = (\tilde{X}_{\cV})_{\text{red}}.\end{equation} Here $(\tilde{X}_{\cV})_{\text{red}}$ denotes the reduced scheme of $\tilde{X}_{\cV}$. If $\{\alpha_i|\ i\in I\}$ is a strong generating set for $\cV$, the images of these fields in $R_{\cV}$ will generate $R_{\cV}$ as a ring. In particular, $R_{\cV}$ is finitely generated if and only if $\cV$ is strongly finitely generated. 

Since the $\beta\gamma$-system $\cS$ is freely generated by $\beta,\gamma$, $R_{\cS} \cong \mathbb{C}[b,g]$, where $b,g$ denote the images of $\beta,\gamma$ in $R_{\cS}$. On the other hand, since $\cW$ is not freely generated by $L,W$, the structure of $R_{\cW}$ is more complicated. 

\begin{lemma} \label{lem:rw} Let $\ell,w$ denote the images of $L,W$ in $R_{\cW}$. Then $R_{\cW} \cong \mathbb{C}[\ell,w] / \langle w^2 - \ell^3\rangle$. \end{lemma}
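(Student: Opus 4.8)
Since $\cW$ is strongly generated by $L$ and $W$, the classes $\ell,w$ generate the ring $R_{\cW}$, and they are homogeneous of conformal weights $2$ and $3$; thus $R_{\cW}=\C[\ell,w]/J$ for a homogeneous ideal $J$, and the content of the lemma is that $J=\langle w^2-\ell^3\rangle$. The plan is to prove the two inclusions separately: $J\subseteq\langle w^2-\ell^3\rangle$ from Wang's embedding, and $J\supseteq\langle w^2-\ell^3\rangle$ from the null vector of $\cW^{-2}_3$.

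For the first inclusion, the embedding $i\colon\cW\hookrightarrow\cS$ of \eqref{eq:embeddingW} is a homomorphism of vertex algebras and hence induces a ring homomorphism $\bar\imath\colon R_{\cW}\to R_{\cS}=\C[b,g]$. In $R_{\cS}$ every normally ordered monomial one of whose factors carries a derivative maps to $0$: indeed $:(\partial a)b:=a_{(-2)}b\in C(\cS)$ by \eqref{def:zhucomm}, and since $R_{\cS}$ is commutative one also has $:a(\partial b):\equiv:(\partial b)a:=b_{(-2)}a\in C(\cS)$ (first commuting a differentiated interior factor to the front). Reading off \eqref{eq:embeddingW}, only the undifferentiated leading terms survive, so $\bar\imath(\ell)=\tfrac12(bg)^2$ and $\bar\imath(w)=\tfrac1{2\sqrt2}(bg)^3$. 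Consequently $\bar\imath(w^2-\ell^3)=0$, and the composite $\C[\ell,w]\twoheadrightarrow R_{\cW}\xrightarrow{\ \bar\imath\ }\C[b,g]$ has image $\C[(bg)^2,(bg)^3]$; since the classes $\ell^a$ and $\ell^a w$ ($a\ge0$) go to pairwise distinct powers of $bg$, the kernel of this composite is exactly $\langle w^2-\ell^3\rangle$, whence $J\subseteq\langle w^2-\ell^3\rangle$ and, in particular, every $\ell^a$ is nonzero in $R_\cW$.

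For the reverse inclusion it suffices to check that $w^2=\ell^3$ holds in $R_{\cW}$, i.e. that $:WW:-\,:LLL:\in C(\cW)$; combined with the first inclusion this forces $J=\langle w^2-\ell^3\rangle$. Let $\cI\subset\cW^{-2}_3$ be the maximal graded ideal (nonzero, as $\cW$ is a proper quotient) and $\pi\colon\cW^{-2}_3\to R_{\cW^{-2}_3}=\C[\ell,w]$ the projection. Then $R_{\cW}=R_{\cW^{-2}_3}/\pi(\cI)$, so $J=\pi(\cI)$; moreover $\pi(\cI)$ is an ordinary ideal of $\C[\ell,w]$, because Zhu's Poisson bracket on $R_{\cW^{-2}_3}$ vanishes identically: $\{\ell,w\}=\pi(L_{(0)}W)=\pi(\partial W)=0$ since $W$ is primary, $\{\ell,\ell\}=\pi(\partial L)=0$, and $\{w,w\}=\pi(W_{(0)}W)=\pi\bigl(\tfrac92:(\partial L)L:-\tfrac3{16}\partial^3L\bigr)=0$ from the $WW$-OPE recorded above, every term of which lies in $C(\cW^{-2}_3)$. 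Since $\cW^{-2}_3$ is freely generated and conformally graded, $\cI$ contains a singular vector; as the conformal-weight-$6$ part of $\C[\ell,w]$ is spanned by $\ell^3$ and $w^2$ while that of $\langle w^2-\ell^3\rangle$ is one-dimensional, it is enough to produce a singular vector $N\in\cW^{-2}_3$ of conformal weight $6$ with $\pi(N)\neq0$: then $\pi(N)$ is necessarily a nonzero multiple of $w^2-\ell^3$, so $\langle w^2-\ell^3\rangle=\langle\pi(N)\rangle\subseteq\pi(\cI)=J$.

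The single substantive point is thus producing this weight-$6$ null vector and checking $\pi(N)\neq0$ (equivalently $\dim(R_\cW)_6=1$). I would carry this out either by writing the general conformal-weight-$6$ element of $\cW^{-2}_3$ in its PBW basis, imposing annihilation by the positive modes of $L$ and $W$ (using the $WW$-OPE above together with the standard Virasoro relations), and reading off that the coefficients of $:WW:$ and $:LLL:$ are nonzero and opposite; or, more cheaply, by transporting the identity $:WW:-\,:LLL:\in C(\cW)$ through \eqref{eq:embeddingW} and verifying it as a finite polynomial identity in $\beta,\gamma$ and their derivatives inside $\cS$. (The nonstandard normalization of $W$ chosen earlier is precisely the one making this coefficient equal to $1$, so that the relation reads $w^2-\ell^3$ on the nose.) Everything else in the argument is formal; this explicit computation is the main obstacle.
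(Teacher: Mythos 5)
Your overall architecture---prove the two inclusions separately, using the free-field realization $i:\cW\to\cS$ as the main tool---matches the paper's, but your treatment of the inclusion $J\subseteq\langle w^2-\ell^3\rangle$ is genuinely different and, in my view, cleaner. The paper lifts a relation $p\in J$ to a normally ordered relation in $\cW$, rewrites it modulo the weight-$6$ null vector until only terms $:L^i:$ or $:L^iW:$ survive, and then argues that the derivative-free leading term $:\beta^{2i+3j}\gamma^{2i+3j}:$ cannot be canceled. You instead pass directly to the induced map $R_{\cW}\to R_{\cS}=\C[b,g]$, compute $\bar\imath(\ell)=\tfrac12(bg)^2$ and $\bar\imath(w)=\tfrac1{2\sqrt2}(bg)^3$ (correct: every monomial containing a derivative dies in $R_{\cS}$), and observe that the composite $\C[\ell,w]\to\C[b,g]$ has kernel exactly $\langle w^2-\ell^3\rangle$ because the classes $\ell^a$ and $\ell^aw$ land on distinct monomials $(bg)^{2a}$ and $(bg)^{2a+3}$ up to nonzero scalars. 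This is a correct and tidier packaging of the same underlying input. One small quibble: $\pi(\cI)$ is an ordinary ideal of $\C[\ell,w]$ automatically, since a vertex algebra ideal is closed under $a_{(-1)}\cdot\,$; the vanishing of the Poisson bracket is irrelevant to that point.

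The gap is in the reverse inclusion. You correctly reduce it to the single claim that $:W^2:-:L^3:\,\in C(\cW)$, equivalently that $\cW_3^{-2}$ has a weight-$6$ null vector whose image in $R_{\cW_3^{-2}}=\C[\ell,w]$ is a nonzero multiple of $w^2-\ell^3$---but you then declare this ``the main obstacle'' and only sketch two ways you would verify it, without carrying either out. This is exactly the point where the paper imports an external fact: Lemma 2.1 of Wang's paper [WaII] supplies the explicit relation $:W^2:-:L^3:-\tfrac78:(\partial^2L)L:-\tfrac{19}{32}:(\partial L)^2:\,=0$ in $\cW$, whose last two terms visibly lie in $C(\cW)$. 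Your second proposed route is viable only in the form ``exhibit explicit correction terms lying in $C(\cW)$ and verify the resulting identity as a polynomial computation in $\beta,\gamma$ inside $\cS$, then pull back along the injective map $i$.'' It would \emph{not} suffice to check merely that $i(:W^2:-:L^3:)$ lies in $C(\cS)$: the map $R_{\cW}\to R_{\cS}$ has no a priori reason to be injective (the paper's later example $\cH^{\Z_2}\subseteq\cH$ shows this can fail), and indeed whether $w^2-\ell^3$ vanishes in $R_{\cW}$ is precisely what remains to be proven at that stage. So the proof is incomplete until that computation, or a citation replacing it, is actually supplied; everything surrounding it is sound.
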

\begin{proof} Since $\cW$ is strongly generated by $L,W$, $R_{\cW}$ is generated by $\ell,w$, so $R_{\cW} \cong  \mathbb{C}[\ell,w] / I$ for some ideal $I$. By Lemma 2.1 of \cite{WaII}, we have the following normally ordered relation in $\cW$ at weight $6$:
\begin{equation} \label{eq:we6relationW} :W^2: - :L^3: - \frac{7}{8}:( \partial^2 L)L: - \frac{19}{32} :(\partial L)^2: \ = 0.\end{equation} Note that \eqref{eq:we6relationW} differs slightly from the formula in \cite{WaII} because our normalization of $W$ is different. It follows that $w^2 - \ell^3  \in I$. 

To see that $I \subseteq \langle w^2 - \ell^3 \rangle$, let $p = p(\ell,w) \in I$. Without loss of generality, we may assume $p$ is homogeneous of weight $d$. It must come from a normally ordered polynomial relation $$P = P(L,\partial L, \dots, W, \partial W, \dots) = 0$$ of weight $d$ in $\cW$ among $L,W$ and their derivatives. The monomials of $p$ correspond to the normally ordered monomials of $P$ which do not lie in $C(\cW)$, and have the form 
\begin{equation} \label{liwj} :L^i W^j:,\qquad 2i + 3j = d.\end{equation} Using \eqref{eq:we6relationW} repeatedly, we can rewrite this relation in the form $$P' = P'(L,\partial L, \dots, W, \partial W, \dots) = 0,$$  where all terms of the form \eqref{liwj} that appear either have $j = 0$ or $j=1$. In fact, since $P'$ is homogeneous of weight $d$, we must have $j = 0$ if $d$ is even, and $j = 1$ if $d$ is odd, so only one such term can appear. If this term appears with nonzero coefficient, as a normally ordered polynomial in $\beta, \gamma$ and their derivatives, it will contribute the term $:\beta^{2i+3j} \gamma^{2i + 3j}:$, which cannot be canceled. This contradicts $P' = 0$, so each monomial in $P'$ must lie in $C(\cW)$. Equivalently, $p \in \langle w^2 - \ell^3\rangle$.
 \end{proof}

\section{Jet schemes and arc spaces} \label{sect:jet} 
We recall some basic facts about jet schemes, following the notation in \cite{EM}. Let $X$ be an irreducible scheme over $\mathbb{C}$ of finite type. The first jet scheme $X_1$ is the total tangent space of $X$, and for $m>1$ the jet schemes $X_m$ are higher-order generalizations which are determined by their functor of points. Given a $\mathbb{C}$-algebra $A$, we have a bijection
$$\text{Hom} (\text{Spec}  (A), X_m) \cong \text{Hom} (\text{Spec}  (A[t]/\langle t^{m+1}\rangle ), X).$$ Thus the $\mathbb{C}$-valued points of $X_m$ correspond to the $\mathbb{C}[t]/\langle t^{m+1}\rangle$-valued points of $X$. For $p>m$, we have projections $\pi_{p,m}: X_p \rightarrow X_m$ and $\pi_{p,m} \circ \pi_{q,p} = \pi_{q,m}$  when $q>p>m$. The assignment $X\mapsto X_m$ is functorial, and a morphism $f:X\ra Y$ induces $f_m: X_m \ra Y_m$ for all $m\geq 1$. If $X$ is nonsingular, $X_m$ is irreducible and nonsingular for all $m$. If $X,Y$ are nonsingular and $f:X\ra Y$ is a smooth surjection, $f_m$ is surjective for all $m$. 

For an affine scheme $X=\text{Spec}(R)$ where $R= \mathbb{C}[y_1,\dots,y_r] / \langle f_1,\dots, f_k\rangle$, $X_m$ is also affine and we can give explicit equations for $X_m$ as follows. Define variables $y_1^{(i)},\dots y_r^{(i)}$ for $i=0,\dots, m$, and define a derivation $D$ by 
\begin{equation}\label{actiond} D(y_j^{(i)}) = \bigg\{ \begin{matrix} y_j^{(i+1)} & 0\leq i < m \cr 0 & i = m \end{matrix},\end{equation} 
which specifies its action on all of $\mathbb{C}[y_1^{(i)},\dots, y_r^{(i)}]$, for $0\leq i \leq m$. In particular, $f_{\ell}^{(i)} = D^i ( f_{\ell})$ is a well-defined polynomial in $\mathbb{C}[y_1^{(i)},\dots, y_r^{(i)}]$. Letting 
$$R_m = \mathbb{C}[y_1^{(i)},\dots, y_r^{(i)}]/ \langle f_1^{(i)},\dots, f_k^{(i)} \rangle,$$ 
we have $X_m\cong \text{Spec}(R_m)$. By identifying $y_j$ with $y_j^{(0)}$, we may identify $R$ with a subalgebra of $R_m$. There is a $\mathbb{Z}_{\geq 0}$-grading on $R_m$ which we call {\it height}, given by
\begin{equation} \label{height} R_m = \bigoplus_{n\geq 0} R_m[n],\qquad \text{ht}(y^{(i)}_j) = i.\end{equation} For all $m$, $R_m[0] = R$ and $R_m[n]$ is an $R$-module.

Given a scheme $X$, define \begin{equation} \label{defarcspace} X_{\infty}  = \lim_{\leftarrow } X_m,\end{equation} which is known as the {\it arc space} of $X$. For a $\mathbb{C}$-algebra $A$, we have a bijection
$$\text{Hom} (\text{Spec} (A), X_{\infty} ) \cong \text{Hom} (\text{Spec} (A[[t]]), X),$$ so the $\mathbb{C}$-valued points of $X_{\infty}$ correspond to the $\mathbb{C}[[t]]$-valued points of $X$. If $X = \text{Spec}(R)$ as above, $$X_{\infty} \cong \text{Spec} (R_{\infty}), \text{where} \ R_{\infty}  = \mathbb{C}[y_1^{(i)},\dots, y_r^{(i)}] / \langle f_1^{(i)}, \dots, f_k^{(i)} \rangle.$$ Here $i\geq 0$, and $D (y^{(i)}_j) = y^{(i+1)}_j$ for all $i$. 

By a theorem of Kolchin \cite{Kol}, $X_{\infty}$ is irreducible if $X$ is irreducible. However, even if $X$ is irreducible and reduced, $X_{\infty}$ need not be reduced. The following result is due to Sebag (see Example 8 of \cite{SI}, as well as more general results in \cite{SII}), but we include a proof for the benefit of the reader.

\begin{lemma} For $X = \text{Spec} (\mathbb{C}[\ell, w ] / \langle w^2 - \ell^3\rangle ) = \tilde{X}_{\cW}$, $X_{\infty}$ is not reduced.
\end{lemma}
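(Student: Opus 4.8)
The plan is to exhibit an explicit nonzero nilpotent in $R_{\infty} = \mathbb{C}[\ell^{(i)},w^{(i)} : i\geq 0]/I$, where $I = \langle f^{(i)} : i\geq 0\rangle$ and $f = w^2-\ell^3$, so that $f^{(0)} = (w^{(0)})^2-(\ell^{(0)})^3$, $f^{(1)} = 2w^{(0)}w^{(1)} - 3(\ell^{(0)})^2\ell^{(1)}$, and so on. The element I would use is
$$g = 2\,\ell^{(0)}w^{(1)} - 3\,w^{(0)}\ell^{(1)}.$$
The reason to expect $g$ to be nilpotent is that it vanishes on every arc coming from the normalization $\nu\colon \mathbb{A}^1 \to X$, $s\mapsto (s^2,s^3)$: substituting $\ell = s^2$, $w = s^3$ (hence $\ell' = 2ss'$, $w' = 3s^2 s'$) sends $g$ to $6s^4 s' - 6s^4 s' = 0$, and such arcs are dense in $X_\infty$ because every arc $(\ell(t),w(t))$ with $\ell(t)\not\equiv 0$ lifts along $\nu$ via $s(t) = w(t)/\ell(t)$, which lies in $\mathbb{C}[[t]]$ (being integral over it by $s^2 = \ell$). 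The work is to make this rigorous and, in particular, to check that $g$ is not already $0$ in $R_\infty$.

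For nilpotency I would verify directly that $g^3\in I$, using two short identities modulo $I$. First, expanding $g^2$ and substituting $(w^{(0)})^2\equiv (\ell^{(0)})^3$ and $2w^{(0)}w^{(1)}\equiv 3(\ell^{(0)})^2\ell^{(1)}$ gives
$$g^2 \equiv (\ell^{(0)})^2\bigl(4(w^{(1)})^2 - 9\,\ell^{(0)}(\ell^{(1)})^2\bigr) \pmod I.$$
Second, the same two substitutions applied to $(\ell^{(0)})^2 g = 2(\ell^{(0)})^3 w^{(1)} - 3(\ell^{(0)})^2 w^{(0)}\ell^{(1)}$ give the exact identity $(\ell^{(0)})^2 g = w^{(0)}f^{(1)} - 2f^{(0)}w^{(1)}$, so $(\ell^{(0)})^2 g\in I$. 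Multiplying the first congruence by $g$ and invoking the second yields $g^3\in I$, i.e.\ $g^3 = 0$ in $R_\infty$.

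To see $g\neq 0$ in $R_\infty$ I would use the height grading \eqref{height}, in which $\ell^{(i)}$ and $w^{(i)}$ have degree $i$; then each $f^{(i)}$ is homogeneous of degree $i$, so $I$ is a homogeneous ideal, and $g$ is homogeneous of degree $1$. Thus $g\in I$ would force $g = (a\ell^{(1)} + bw^{(1)})f^{(0)} + c\,f^{(1)}$ for some $a,b,c\in\mathbb{C}[\ell^{(0)},w^{(0)}]$, these being the only ways to produce a degree-$1$ element from the generators. Comparing the coefficients of $\ell^{(1)}$ on both sides and reducing modulo $\langle (w^{(0)})^2-(\ell^{(0)})^3\rangle$ would give $w^{(0)}\in\bigl((\ell^{(0)})^2\bigr)$ inside $R_{\cW}\cong \mathbb{C}[\ell,w]/\langle w^2-\ell^3\rangle$ (Lemma~\ref{lem:rw}), which is false since $\{\ell^i : i\geq 0\}\cup\{\ell^i w : i\geq 0\}$ is a $\mathbb{C}$-basis of $R_{\cW}$ and $w$ is not a combination of the basis elements $\ell^i,\ell^i w$ with $i\geq 2$. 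Hence $g$ is a nonzero element of $R_\infty$ with $g^3 = 0$, and $X_\infty$ is not reduced. (Since $g$ already lies in $R_1$ and the argument uses only $f^{(0)}$ and $f^{(1)}$, the first jet scheme $(\tilde{X}_{\cW})_1$ is non-reduced as well.)

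I do not anticipate a real obstacle: the only step requiring an idea rather than bookkeeping is the last one, showing $g\neq 0$, and there the height grading reduces it to the concrete statement about $R_{\cW}$ above. If one prefers to avoid the explicit identities altogether, one can argue more softly: by Kolchin's theorem $X_\infty$ is irreducible, so $R_\infty/\sqrt{0}$ is a domain; the induced arc map $\mathbb{A}^1_\infty = \Spec \mathbb{C}[s^{(i)} : i\geq 0]\to X_\infty$ is dominant, its image containing the dense open locus of arcs centered at smooth points of $X$, so $R_\infty/\sqrt{0}$ embeds into $\mathbb{C}[s^{(i)} : i\geq 0]$; it then suffices to note that $g$ maps to $0$ there (the substitution $\ell = s^2$, $w = s^3$) while $g\neq 0$ in $R_\infty$ (the height argument), so $0\neq g\in\sqrt{0}$.
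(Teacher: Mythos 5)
Your proof is correct and follows essentially the same route as the paper: the witness $g=2\ell^{(0)}w^{(1)}-3w^{(0)}\ell^{(1)}$ is exactly $-r_1$ from the paper's proof, which likewise shows $r_1\notin I$ by a degree argument and $(r_1)^3\in I$ by exhibiting an explicit combination of $f^{(0)},\dots,f^{(3)}$. Your factored verification of $g^3\in I$ via $(\ell^{(0)})^2g=w^{(0)}f^{(1)}-2f^{(0)}w^{(1)}$ and your height-graded non-membership argument are both valid and a bit cleaner than the paper's, but the idea is the same.
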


\begin{proof} We have \begin{equation} \label{eq:rinfty} X_{\infty} \cong \text{Spec}(R_{\infty}),\qquad R_{\infty} = \mathbb{C}[\ell^{(0)}, \ell^{(1)},\dots, w^{(0)}, w^{(1)},\dots] / \langle f^{(0)}, f^{(1)},\dots \rangle,\end{equation} where $f^{(0)} = (\ell^{(0)})^3 - (w^{(0)})^2$. Consider the element \begin{equation} \label{def:r1} r_1 = 3 \ell^{(1)} w^{(0)} - 2 \ell^{(0)} w^{(1)} \in \mathbb{C}[\ell^{(0)}, \ell^{(1)},\dots, w^{(0)}, w^{(1)},\dots].\end{equation} First, $r_1 \notin \langle f^{(0)}, f^{(1)},\dots \rangle$ since no element of this ideal has leading term of degree $2$. However, $(r_1)^3 \in \langle f^{(0)}, f^{(1)},\dots \rangle$; a calculation shows that
\begin{equation} \begin{split} (r_1)^3 & = \bigg(-81 \ell^{(0)} \ell^{(1)} \ell^{(2)} w^{(0)} - \frac{27}{2} (\ell^{(0)})^2 \ell^{(3)} w^{(0)} + 18 (\ell^{(0)})^2 \ell^{(2)} w^{(1)} - 4 (w^{(1)})^3 + 15 w^{(0)} w^{(1)} w^{(2)} 
\\ & + 9 (w^{(0)})^2 w^{(3)}\bigg) f^{(0)} + \bigg( \frac{9}{2} (\ell^{(0)})^2 \ell^{(2)} w^{(0)} + 12 (\ell^{(0)})^2 \ell^{(1)} w^{(1)} - 7 w^{(0)} (w^{(1)})^2 - 3 (w^{(0)})^2 w^{(2)}\bigg)f^{(1)} \\ & + \bigg( -\frac{9}{2}(\ell^{(0)})^2 \ell^{(1)} w^{(0)} - 6 (\ell^{(0)})^3 w^{(1)} + 9 (w^{(0)})^2 w^{(1)} \bigg) f^{(2)} +\bigg( \frac{9}{2} (\ell^{(0)})^3 w^{(0)} - \frac{9}{2} (w^{(0)})^3 \bigg) f^{(3)}.\end{split} \end{equation} Therefore regarded as an element of $R_{\infty}$, $r_1 \neq 0$ but $(r_1)^3 = 0$. \end{proof}

It is well known that in characteristic zero, for any affine scheme $X$, the nilradical $\cN \subseteq \cO(X_{\infty})$ is a differential ideal; in other words, $D(\cN) \subseteq \cN$. A natural question (see \cite{KS}) is whether $\cN$ is finitely generated as a differential ideal, and whether an explicit generating set can be found. In general, $\cN$ need not be finitely generated; this was shown for $X = \text{Spec}(\mathbb{C}[x,y] / \langle xy\rangle )$ in \cite{BS}. In the example $X =\text{Spec} (\mathbb{C}[\ell, w ] /\langle w^2 - \ell^3\rangle )$, a calculation shows that in addition to $r_1$, \begin{equation} \label{def:r2}  r_2 = (w^{(1)})^2  - \frac{9}{4} \ell^{(0)} (\ell^{(1)})^2 \end{equation} does not lie in $\langle f^{(0)}, f^{(1)},\dots \rangle$, but $(r_2)^3$ does. So $r_2$ is another nontrivial element of $\cN$. We expect that $\cN$ is generated as a differential ideal by $r_1$ and $r_2$.

The following characterization of $\cN$ in this example will also be useful to us.

\begin{lemma} Let $X = \text{Spec} (\mathbb{C}[\ell, w ] / \langle w^2 - \ell^3\rangle )$ and let $t$ be a coordinate function on $\mathbb{C}$. Consider the map
\begin{equation} \label{eq:birational} \mathbb{C} \ra X,\qquad t \mapsto (t^2, t^3), \end{equation} and the induced homomorphism
\begin{equation} \label{eq:induced} \phi: \cO(X_{\infty}) \ra \cO(\mathbb{C}_{\infty}),\qquad \ell^{(0)} \mapsto (t^{(0)})^2,\qquad w^{(0)} \mapsto (t^{(0)})^3. \end{equation} Then $\cN = \text{ker}(\phi)$.
\end{lemma}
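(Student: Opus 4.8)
The plan is to identify $\ker(\phi)$ with the nilradical $\cN$ of $\cO(X_\infty)$ by a two-sided inclusion, exploiting that the map \eqref{eq:birational} is the normalization of the cuspidal cubic and that $\mathbb{C}_\infty = \Spec \mathbb{C}[t^{(0)}, t^{(1)}, \dots]$ is a polynomial ring, hence reduced. First I would observe that $\cO(\mathbb{C}_\infty)$ is a domain with no nilpotents, so $\ker(\phi)$ is automatically a radical ideal; since it is also $D$-stable (because $\phi$ intertwines the derivations $D$ on both sides, as it is induced by a morphism of schemes), we get that $\ker(\phi)$ is a radical differential ideal containing $\cN$ once we check $\cN \subseteq \ker(\phi)$. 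That containment is the easy direction: $\phi$ factors through the reduced scheme because $\mathbb{C}_\infty$ is reduced, so every nilpotent of $\cO(X_\infty)$ dies under $\phi$. Concretely one can also see this directly from Lemma~3.2 — one checks $\phi(r_1) = 3\cdot 2t^{(0)}t^{(1)}\cdot (t^{(0)})^3 - 2(t^{(0)})^2\cdot 3(t^{(0)})^2 t^{(1)} = 0$ — but the conceptual argument is cleaner and covers all of $\cN$ at once.

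The substantive direction is $\ker(\phi) \subseteq \cN$, equivalently: every element of $\cO(X_\infty)$ killed by $\phi$ is nilpotent. For this I would pass to the level of varieties rather than schemes: it suffices to show that the morphism $\mathbb{C}_\infty \to X_\infty$ induced by \eqref{eq:birational} is dominant, i.e.\ has image dense in $X_\infty$, because then the induced map on reduced rings $\cO((X_\infty)_{\mathrm{red}}) \to \cO(\mathbb{C}_\infty)$ is injective, forcing $\ker(\phi)$ to consist of nilpotents. Density of the image follows from Kolchin's theorem, already quoted in the excerpt: $X$ is irreducible and reduced, so $X_\infty$ is irreducible; the map $t \mapsto (t^2,t^3)$ is a birational morphism $\mathbb{C} \to X$ (an isomorphism over the smooth locus $X \setminus \{0\}$), hence the arc-space map is an isomorphism over the open dense subset $\pi_0^{-1}(X\setminus\{0\}) \subseteq X_\infty$ of arcs not based at the cusp, where $\pi_0 : X_\infty \to X$ is the projection. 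An isomorphism onto a dense open subset of an irreducible scheme is in particular dominant, which is exactly what we need.

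The main obstacle is making the last step fully rigorous at the scheme level: one must argue that the arc-space functor sends the open immersion $X \setminus \{0\} \hookrightarrow X$ to an open immersion $\pi_0^{-1}(X \setminus \{0\}) \hookrightarrow X_\infty$ with dense image, and that over this open set the normalization map becomes an isomorphism. The density is where irreducibility of $X_\infty$ (Kolchin) is essential — a priori the locus of arcs through the cusp could be a whole component, but irreducibility rules this out since that locus is the proper closed subset cut out by $\ell^{(0)} = w^{(0)} = 0$. Once density is in hand, the inclusion $\ker(\phi) \subseteq \cN$ is formal: a function vanishing on a dense subset of an irreducible scheme lies in the nilradical. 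Combined with the reverse inclusion from the first paragraph, this gives $\cN = \ker(\phi)$.
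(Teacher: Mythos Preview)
Your proof is correct and follows essentially the same approach as the paper: both directions of the inclusion are argued exactly as in the paper's proof, with $\cN \subseteq \ker(\phi)$ coming from reducedness of $\cO(\mathbb{C}_\infty)$ and $\ker(\phi) \subseteq \cN$ coming from dominance of the induced map on arc spaces. The only difference is that the paper dispatches the dominance step by citing Proposition~3.2 of \cite{EM} (birational morphisms induce dominant maps on arc spaces), whereas you unpack this by combining the isomorphism over the smooth locus with Kolchin's irreducibility theorem---a perfectly valid and slightly more self-contained route to the same conclusion.
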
 

\begin{proof} Since \eqref{eq:birational} is birational, the map $\mathbb{C}_{\infty} \ra X_{\infty}$ on arc spaces induced by \eqref{eq:birational} is dominant,
see Proposition 3.2 of \cite{EM}.
 Therefore $\text{ker}(\phi) \subseteq \cN$. On the other hand, $\cN \subseteq \text{ker}(\phi)$ since $\cO(\mathbb{C}_{\infty}) \cong \mathbb{C}[t^{(0)}, t^{(1)},\dots]$, which is an integral domain. \end{proof}

\section{Li's filtration and singular support}
For any vertex algebra $\cV$, we have Li's canonical decreasing filtration
$$F^0(\cV) \supseteq F^1(\cV) \supseteq \cdots,$$ where $F^p(\cV)$ is spanned by elements of the form
$$:\partial^{n_1} a^1 \partial^{n_2} a^2 \cdots \partial^{n_r} a^r:,$$ 
where $a^1,\dots, a^r \in \cV$, $n_i \geq 0$, and $n_1 + \cdots + n_r \geq p$ \cite{LiI}. Clearly $\cV = F^0(\cV)$ and $\partial F^i(\cV) \subseteq F^{i+1}(\cV)$. Set $$\text{gr}(\cV) = \bigoplus_{p\geq 0} F^p(\cV) / F^{p+1}(\cV),$$ and for $p\geq 0$ let 
$$\sigma_p: F^p(\cV) \ra F^p(\cV) / F^{p+1}(\cV) \subseteq \text{gr}(\cV)$$ be the projection. Note that $\text{gr}(\cV)$ is a graded commutative algebra with product
$$\sigma_p(a) \sigma_q(b) = \sigma_{p+q}(a_{(-1)} b),$$ for $a \in F^p(\cV)$ and $b \in F^q(\cV)$. We say that the subspace $F^p(\cV) / F^{p+1}(\cV)$ has {\it height} $p$. Note that $\text{gr}(\cV)$ has a differential $\partial$ defined by $$\partial( \sigma_p(a) ) = \sigma_{p+1} (\partial a),$$ for $a \in F^p(\cV)$. Finally, $\text{gr}(\cV)$ has the structure of a Poisson vertex algebra \cite{LiI}; for $n\geq 0$, we define
$$\sigma_p(a)_{(n)} \sigma_q(b) = \sigma_{p+q-n} a_{(n)} b.$$
Zhu's commutative algebra $R_{\cV}$ is isomorphic to the subalgebra $F^0(\cV) / F^1(\cV)\subseteq \text{gr}(\cV)$, since $F^1(\cV)$ coincides with the space $C(\cV)$ defined by \eqref{def:zhucomm}. Moreover, $\text{gr}(\cV)$ is generated by $R_{\cV}$ as a differential graded commutative algebra \cite{LiI}. Since $\tilde{X}_{\cV} = \text{Spec}(R_{\cV})$, there is always a surjective homomorphism of differential graded rings
\begin{equation} \label{dgalgebras} \Phi_{\cV}: \cO((\tilde{X}_{\cV})_{\infty}) \ra \text{gr}(\cV),\end{equation} where the grading on $ \cO((\tilde{X}_{\cV})_{\infty})$ is given by \eqref{height}. Define the {\it singular support} \begin{equation} \label{def:ss} \text{SS}(\cV)=\text{Spec}(\text{gr}(\cV)),\end{equation} which is then a subscheme of $(\tilde{X}_{\cV})_{\infty}$. A natural question which was raised by Arakawa and Moreau \cite{AMI} is whether the map \eqref{dgalgebras} is always an isomorphism. This is true in many examples
and it was recently shown in \cite{AMII} to hold as varieties when $\cV$ is quasi-lisse,
that is,
if $X_{\cV}$ has finitely many symplectic leaves,
see \cite{AK} for the details.
We note that the vertex algebra $\cW$  is not quasi-lisse.

\section{Main result}

\begin{thm} For the vertex algebra $\cW$, the map $\Phi_{\cW}: \cO((\tilde{X}_{\cW})_{\infty}) \ra \text{gr}(\cW)$ is not injective, so $(\tilde{X}_{\cW})_{\infty}$ and $\text{SS}(\cW)$ are not isomorphic as schemes.
\end{thm}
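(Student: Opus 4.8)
The plan is to show that the nilpotent element $r_1 = 3\ell^{(1)} w^{(0)} - 2\ell^{(0)} w^{(1)} \in \cO((\tilde{X}_{\cW})_\infty)$ exhibited in the first lemma of Section~\ref{sect:jet} maps to zero under $\Phi_{\cW}$, while being a genuinely nonzero element of $\cO((\tilde{X}_{\cW})_\infty)$. Since the second lemma of that section already establishes $r_1 \neq 0$ in $\cO((\tilde X_{\cW})_\infty)$, it suffices to compute $\Phi_{\cW}(r_1)$ and verify it vanishes in $\gr(\cW)$. Unwinding the definitions, $\Phi_{\cW}$ sends $\ell^{(0)} \mapsto \sigma_0(L)$, $w^{(0)} \mapsto \sigma_0(W)$, and $\ell^{(1)} \mapsto \partial\sigma_0(L) = \sigma_1(\partial L)$, $w^{(1)} \mapsto \sigma_1(\partial W)$, and more generally $\ell^{(i)} \mapsto \sigma_i(\partial^i L)$, $w^{(i)} \mapsto \sigma_i(\partial^i W)$. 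Both $3\ell^{(1)}w^{(0)}$ and $2\ell^{(0)}w^{(1)}$ land in height $1$, namely in $F^1(\cW)/F^2(\cW)$, so we need to show that the class $\sigma_1\big(3(\partial L)_{(-1)} W - 2 L_{(-1)}(\partial W)\big) = \sigma_1\big(3:(\partial L) W: - 2 :L (\partial W):\big)$ is zero, i.e.\ that $3:(\partial L)W: - 2:L\partial W: \in F^2(\cW)$.

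The cleanest route is to use Wang's explicit embedding $i:\cW \hookrightarrow \cS$ from \eqref{eq:embeddingW} and work inside the freely generated $\beta\gamma$-system, where $\gr(\cS) \cong \C[\beta^{(i)}, \gamma^{(i)}]$ is a polynomial ring and Li's filtration is transparent: $F^p(\cS)$ is spanned by monomials whose total number of derivatives is at least $p$. Under $i$, $L$ has leading term (in lowest height) $\sigma_0(\frac12 :\beta\beta\gamma\gamma:) = \frac12 b^2 g^2$ and $W$ has leading term $\frac{1}{2\sqrt 2} b^3 g^3$ (reading off the $j$-free, derivative-free terms of \eqref{eq:embeddingW}); all other terms in \eqref{eq:embeddingW} carry at least one derivative and hence lie in $F^1(\cS)$. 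Then $:(\partial L)W:$ and $:L\partial W:$ both lie in $F^1$, and I would compute their images in $F^1(\cS)/F^2(\cS)$: the contribution is $\partial(\text{leading term of }L)\cdot(\text{leading term of }W)$ and $(\text{leading term of }L)\cdot\partial(\text{leading term of }W)$ respectively, computed in the Poisson vertex algebra $\gr(\cS)$. Since $\partial(\frac12 b^2g^2) = b b' g^2 + b^2 g g'$ (writing $b' = \sigma_1(\partial\beta)$ etc.) and $\partial(\frac{1}{2\sqrt2}b^3g^3) = \frac{3}{2\sqrt 2}(b^2 b' g^3 + b^3 g^2 g')$, one checks that $3\,\partial(\tfrac12 b^2g^2)\cdot \tfrac{1}{2\sqrt2}b^3g^3$ and $2\cdot\tfrac12 b^2 g^2\cdot \tfrac{3}{2\sqrt 2}(b^2b'g^3 + b^3 g^2g')$ coincide; so the height-$1$ symbol of $3:(\partial L)W: - 2:L\partial W:$ vanishes, i.e.\ this element lies in $F^2(\cS) \cap \cW$. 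A small additional argument (using that $\gr$ of a vertex subalgebra injects into $\gr$ of the ambient one, or directly that the Li filtration on $\cW$ is induced from that on $\cS$) then shows it lies in $F^2(\cW)$, so $\Phi_{\cW}(r_1) = 0$.

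An alternative, more self-contained approach avoids the embedding entirely: work directly in $\cW$ using the OPE of $W$ with itself together with the weight-$6$ relation \eqref{eq:we6relationW}. The identity $3:(\partial L)W: - 2:L\partial W:\ \equiv 0 \bmod F^2(\cW)$ is equivalent to a normally ordered relation at weight $5$ of the form $3:(\partial L)W: - 2:L\partial W: = (\text{terms with }\geq 2\text{ derivatives})$, which in turn should follow by applying $\partial$ to the weight-$6$ relation \eqref{eq:we6relationW} and rearranging, since $\partial(w^2 - \ell^3)$ in the arc algebra is precisely $2w^{(0)}w^{(1)} - 3(\ell^{(0)})^2\ell^{(1)}$ and we are chasing how $r_1 = 3\ell^{(1)}w^{(0)} - 2\ell^{(0)}w^{(1)}$ interacts with it. I expect the main obstacle to be bookkeeping: correctly tracking which correction terms in \eqref{eq:embeddingW} (or in the weight-$6$ relation) land in $F^2$ versus $F^1$, and making sure the Poisson-vertex-algebra product $\sigma_p(a)_{(n)}\sigma_q(b)$ is computed with the right height shifts. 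Once $\Phi_{\cW}(r_1) = 0$ is established, the theorem follows immediately: $r_1$ is a nonzero element of $\cO((\tilde X_{\cW})_\infty)$ in the kernel of $\Phi_{\cW}$, so $\Phi_{\cW}$ is not injective and the two schemes $(\tilde X_{\cW})_\infty$ and $\mathrm{SS}(\cW)$ are not isomorphic.
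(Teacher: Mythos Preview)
Your overall strategy---show that the nilpotent element $r_1$ lies in $\ker\Phi_{\cW}$---is exactly the paper's approach. However, neither of the two routes you sketch actually closes the argument.

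In your first approach you correctly compute that $3:(\partial L)W: - 2:L(\partial W):$ has vanishing height-$1$ symbol in $\text{gr}(\cS)$, hence lies in $F^2(\cS)\cap\cW$. But the ``small additional argument'' you invoke to pass from $F^2(\cS)\cap\cW$ to $F^2(\cW)$ is precisely the assertion that $\text{gr}(i):\text{gr}(\cW)\to\text{gr}(\cS)$ is injective. This is \emph{not} known for $\cW\subseteq\cS$; the paper states only that it is expected, and immediately afterward (Section~6) exhibits an explicit example of a simple vertex subalgebra $\cV\subseteq\cH$ for which the analogous map $\text{gr}(i)$ fails to be injective. So this step is a genuine gap, and the paper deliberately avoids it.

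Your second approach is closer in spirit, but the specific suggestion does not work: applying $\partial$ to the relation \eqref{eq:we6relationW} yields a weight-$7$ identity with leading terms $2:(\partial W)W: - 3:(\partial L)L^2:$, not the desired combination $3:(\partial L)W: - 2:L(\partial W):$ (which, incidentally, has weight $6$, not $5$). What the paper actually uses is that Wang's Lemma~2.1 in \cite{WaII} records a \emph{second, independent} weight-$6$ null vector in $\cW$,
\[
3:(\partial L)W: \ - \ 2:L(\partial W): \ + \ \tfrac{1}{4}\,\partial^3 W \ = \ 0,
\]
distinct from \eqref{eq:we6relationW}. Since $\partial^3 W\in F^3(\cW)\subseteq F^2(\cW)$, this immediately gives $3:(\partial L)W: - 2:L(\partial W):\in F^2(\cW)$, hence $\Phi_{\cW}(r_1)=0$, with no appeal to the embedding into $\cS$ whatsoever. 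The paper treats $r_2$ the same way, via an explicit weight-$8$ relation in $\cW$.
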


\begin{proof} As before, we use the notation $$\cO((\tilde{X}_{\cW})_{\infty}) \cong R_{\infty} = \mathbb{C}[\ell^{(0)}, \ell^{(1)},\dots, w^{(0)}, w^{(1)},\dots] / \langle f^{(0)}, f^{(1)},\dots \rangle.$$ We use the same notation $\partial^i L,\partial^i W$ to denote the images of the fields $\partial^iL,\partial^iW \in \cW$ in the subspace $F^i(\cW) / F^{i+1}(\cW)$ of $\text{gr}(\cW)$. We therefore may identify $\text{gr}(\cW)$ with a quotient of the polynomial ring $\mathbb{C}[L, \partial L,\dots, W, \partial W, \dots]$. In this notation, $\Phi_{\cW}(\ell^{(0)}) = L$ and $\Phi_{\cW}(w^{(0)}) =W$.

 We will show that the nilpotent elements $r_1$ and $r_2$ in $\cO((\tilde{X}_{\cW})_{\infty})$ given by \eqref{def:r1} and \eqref{def:r2}, lie in $\text{ker}(\Phi_{\cW})$. By Lemma 2.1 of \cite{WaII}, we have the following relation in $\cW$ at weight $6$:
$$3 :(\partial L) W: - 2 :L (\partial W):  +\frac{1}{4} \partial^3 W =0.$$ 
Therefore in $F^1(\cW) / F^2(\cW)$, we have the relation 
$$3 (\partial L ) W - 2 L \partial W = 0.$$ Since $\Phi_{\cW}(r_1) = 3 (\partial L ) W - 2 L \partial W$, $r_1 \in \text{ker}(\Phi_{\cW})$. 

Similarly, in $\cW$ we have the following relation in weight $8$:
$$ :(\partial W)^2 :  - \frac{9}{4} :(\partial L)^2 L:   - \frac{3}{16} :(\partial^4 L) L:   - \frac{3}{8} :(\partial^3 L)(\partial L): - \frac{9}{32} :(\partial^2 L)^2:  + \frac{1}{160} \partial^6 L=0,$$ so in $F^2(\cW) / F^3(\cW)$ we have the relation $(\partial W)^2   - \frac{9}{4} (\partial L)^2 L = 0$, and $r_2 \in \text{ker}(\Phi_{\cW})$.
\end{proof}

\begin{thm} \label{thm:isovarieties} Even though $(\tilde{X}_{\cW})_{\infty}$ and $\text{SS}(\cW)$ differ as schemes, the map of varieties
$$\text{SS}(\cW)_{\text{red}} \ra ((\tilde{X}_{\cW})_{\infty})_{\text{red}}$$ induced by $\Phi_{\cW}$, is an isomorphism.
\end{thm}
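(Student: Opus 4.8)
The plan is to show that $\Phi_{\cW}$ induces an isomorphism after passing to reduced schemes by identifying both reduced coordinate rings explicitly and checking that the induced map between them is an isomorphism. The key point is that reducing kills exactly the nilpotent ideals, and on both sides the nilradical is governed by the birational parametrization $t \mapsto (t^2, t^3)$.

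\textbf{Step 1: The reduced coordinate ring of $(\tilde X_{\cW})_\infty$.} By the last lemma of Section~\ref{sect:jet}, the nilradical $\cN \subseteq \cO((\tilde X_{\cW})_\infty)$ equals $\ker(\phi)$, where $\phi: \cO((\tilde X_{\cW})_\infty) \to \cO(\mathbb C_\infty) \cong \mathbb C[t^{(0)}, t^{(1)}, \dots]$ sends $\ell^{(0)} \mapsto (t^{(0)})^2$, $w^{(0)} \mapsto (t^{(0)})^3$ (and is determined on all heights by commuting with $D$, since $\phi$ is a differential homomorphism). Hence $((\tilde X_{\cW})_\infty)_{\text{red}} = \Spec(\im\phi)$, and $\im\phi$ is the differential subalgebra of $\mathbb C[t^{(0)}, t^{(1)}, \dots]$ generated by $(t^{(0)})^2$ and $(t^{(0)})^3$.

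\textbf{Step 2: The reduced coordinate ring of $\text{SS}(\cW)$.} I would produce the analogous birational parametrization on the vertex side. The point is that Wang's embedding $i: \cW \hookrightarrow \cS$ induces a map $\gr(\cW) \to \gr(\cS)$, and $\gr(\cS) \cong \mathbb C[\beta^{(i)}, \gamma^{(i)}]$ is an integral domain (since $\cS$ is freely generated). Under \eqref{eq:embeddingW}, the leading terms (lowest height part) of $L$ and $W$ in $\gr(\cS)$ are $\tfrac12 \beta^{(0)}{}^2 \gamma^{(0)}{}^2$ and $\tfrac{1}{2\sqrt 2}\beta^{(0)}{}^3\gamma^{(0)}{}^3$ — but more usefully, on the associated variety level one has $R_{\cW} \cong \mathbb C[\ell,w]/\langle w^2 - \ell^3\rangle$ by Lemma~\ref{lem:rw}, whose normalization is $\mathbb C[s]$ via $\ell \mapsto s^2$, $w \mapsto s^3$. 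I would argue that $\gr(\cW)$, being generated as a differential algebra by $R_{\cW}$, maps to $\cO(\mathbb C_\infty)$ compatibly, the kernel of this map is the nilradical of $\gr(\cW)$ (using that the target is a domain and that the composite factors through the dominant arc map coming from the birational normalization $\mathbb C \to X_{\cW}$), and the image is again the differential subalgebra generated by $(s^{(0)})^2, (s^{(0)})^3$.

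\textbf{Step 3: Matching.} Both reduced rings are now canonically identified with the same differential subalgebra of a polynomial ring in countably many variables (generated by the square and cube of a single height-$0$ generator), and $\Phi_{\cW}$ is compatible with these identifications because $\Phi_{\cW}(\ell^{(0)}) = L$, $\Phi_{\cW}(w^{(0)}) = W$ and $\Phi_{\cW}$ commutes with $D = \partial$. Therefore the induced map on reduced schemes is an isomorphism. I expect the main obstacle to be Step~2: one must verify cleanly that the kernel of the natural map $\gr(\cW) \to \cO(\mathbb C_\infty)$ is \emph{exactly} the nilradical (not just contained in it), which requires knowing that $\text{SS}(\cW)$ is irreducible with the same underlying reduced structure as the arc space of the cuspidal cubic — this should follow from Kolchin's theorem applied to $X_{\cW}$ together with the fact that $\Phi_{\cW}$ is surjective and becomes an isomorphism modulo nilpotents, but making the diagram of differential algebras commute and checking the domain property carefully is where the real work lies.
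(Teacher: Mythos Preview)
Your ingredients and overall approach match the paper's: both use Wang's embedding $i:\cW\hookrightarrow\cS$ to produce a map $\text{gr}(\cW)\to\text{gr}(\cS)\cong\mathbb{C}[\beta^{(i)},\gamma^{(i)}]$ landing in the differential subalgebra generated by $\beta\gamma$, identify that subalgebra with $\cO(\mathbb{C}_\infty)$ via $t^{(0)}\mapsto\beta\gamma$, and combine this with the lemma $\ker\phi=\cN$. So there is no genuine gap.

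However, you are overcomplicating Step~2 and misidentifying the obstacle. The paper does \emph{not} compute the nilradical $\cN_2$ of $\text{gr}(\cW)$, and explicitly refrains from claiming $\text{gr}(\cW)$ is reduced (it writes ``we expect that $\text{gr}(i)$ is injective\dots however, we caution the reader that the associated graded functor is not left exact''). The clean observation is that it suffices to show $\phi$ \emph{factors through} $\Phi_{\cW}$, i.e.\ that there exists $\bar\phi:\text{gr}(\cW)\to\cO(\mathbb{C}_\infty)$ with $\bar\phi\circ\Phi_{\cW}=\phi$. Once you have this, the argument is one line: if $\Phi_{\cW}(x)\in\cN_2$ then $\phi(x)=\bar\phi(\Phi_{\cW}(x))$ is nilpotent in the domain $\cO(\mathbb{C}_\infty)$, hence zero, hence $x\in\ker\phi=\cN_1$; together with surjectivity of $\Phi_{\cW}$ this gives the isomorphism on reduced rings. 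Your proposed justification (``Kolchin's theorem\dots\ together with the fact that $\Phi_{\cW}$\dots\ becomes an isomorphism modulo nilpotents'') is circular, but the correct justification is the trivial lifting argument just given --- which you already have all the pieces for. The ``real work'' you anticipate is simply not there.
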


\begin{proof} It suffices to show that the map $\phi: \cO((\tilde{X}_{\cW})_{\infty}) \ra \cO(\mathbb{C}_{\infty})$ given by \eqref{eq:induced} factors through the map $\Phi_{\cW}: \cO((\tilde{X}_{\cW})_{\infty}) \ra \text{gr}(\cW)$, since $\text{ker}(\phi) = \cN$. First, the embedding $i:\cW \ra \cS$ given by \eqref{eq:embeddingW} induces a map $$\text{gr}(i): \text{gr}(\cW) \ra \text{gr}(\cS).$$ Identifying $\text{gr}(\cS)$ with $\mathbb{C}[\beta, \partial \beta, \dots, \gamma, \partial \gamma ,\dots]$, this map is given on generators by
$$\text{gr}(i) (L) =  (\beta\gamma)^2,\qquad \text{gr}(i) (W) = (\beta\gamma)^3.$$
We also have an injective map of differential graded algebras $$\psi: \cO(\mathbb{C}_{\infty}) \ra \text{gr}(\cS),$$ defined on the generator $t^{(0)}$ of  $\cO(\mathbb{C}_{\infty})$ by $\psi(t^{(0)}) = \beta\gamma$. Since $$\Phi_{\cW}(\ell^{(0)})  =  (\beta\gamma)^2 =\text{gr}(i)(L)  = \psi((t^{(0)})^2),\qquad \Phi_{\cW}(w^{(0)}) =  (\beta\gamma)^3 = \text{gr}(i)(W)  = \psi((t^{(0)})^3),$$ and $L,W$ generate $\text{gr}(\cW)$ as a differential algebra, it is clear that $$\text{gr}(i)(\text{gr}(\cW)) = \psi(A) \cong A = \phi(\cO((\tilde{X}_{\cW})_{\infty})),$$ where $A\subseteq \cO(\mathbb{C}_{\infty})$ is the subalgebra generated by $(t^{(0)})^2, (t^{(0)})^3$, and their derivatives. This completes the proof.
\end{proof}

In this example, we expect that $\text{gr}(i): \text{gr}(\cW) \ra \text{gr}(\cS)$ is injective, so that $\text{gr}(\cW)\cong A$, and in particular is reduced. However, we caution the reader that the associated graded functor is not left exact in general.

\section{Failure of associated graded functor to be left exact}
Here we give an example of a simple vertex algebra $\cV$ which has a free field realization $i: \cV \ra \cH$ where $\cH$ is the Heisenberg algebra, such that the induced map $\text{gr}(i): \text{gr}(\cV) \ra \text{gr}(\cH)$ is not injective. 

First,  $\cH$ is generated by an even field $\alpha$ satisfying $$\alpha(z) \alpha(w) \sim (z-w)^{-2},$$ and has Virasoro element $L = \frac{1}{2} :\alpha\alpha:$ of central charge $c = 1$. There is an action of $\mathbb{Z}_2$ sending $\alpha \mapsto -\alpha$ which preserves $L$, and we consider the orbifold $$\cV = \cH^{\mathbb{Z}_2}.$$  By a result of Dong and Nagatomo \cite{DN}, $\cV$ is strongly generated by $L$ together with a unique up to scalar weight $4$ field primary field 
$$W^4 = -\frac{1}{6\sqrt{6}} :\alpha^4: -\frac{1}{4 \sqrt{6}} :(\partial \alpha)^2: +\frac{1}{6\sqrt{6}} :(\partial^2 \alpha) \alpha:,$$ 
which is normalized so that it satisfies $$W^4(z) W^4(w) \sim \frac{1}{4} (z-w)^{-8} + \cdots. $$ One can check by direct calculation that $\cV$ is isomorphic to the simple, principal $\cW$-algebra of $\gs\gp_4$ with central charge $c=1$. It is convenient to replace $W^4$ with the field
$$W = \frac{35}{132} :(\partial^2 \alpha) \alpha: \  = \frac{35 \sqrt{2/3}}{33} W^4 +\frac{70}{297} :L^2:  +\frac{35}{396} \partial^2 L,$$ which is not primary. A calculation shows that we have the following nontrivial relations in $\cV$ at weights $8$ and $10$, respectively.

\begin{equation} \label{eq:hrel1} \begin{split} :W^2: - :L^2W: 
+ \frac{35}{132} :(\partial^2 L) L^2:   -  \frac{35}{264} :(\partial L)^2 L: + \frac{13265}{69696} :(\partial^4 L) L:  \\ +  \frac{19495}{139392}:( \partial^3 L) \partial L:   - \frac{59}{88} :(\partial^2 L) W:  - \frac{497}{352} :(\partial L) \partial W:   - \frac{181}{528} :L \partial^2 W:  \\ - \frac{139}{2112} \partial^4 W + \frac{10955}{557568} \partial^6 L \ =0,\end{split} \end{equation}

\begin{equation} \label{eq:hrel2} \begin{split} : L^3 W: + \frac{4455}{1024} :(\partial W) \partial W:  - \frac{35}{132} :(\partial^2 L) L^3:  + 
 \frac{35}{264} :(\partial L)^2 L^2:  + \frac{347}{256} :(\partial L)^2 W:  - 
\\  \frac{1069}{256} :(\partial L) L \partial W:  - \frac{49}{16} :L^2 \partial^2 W:  + 
 \frac{385}{576} :(\partial^4 L) L^2:  + \frac{48965}{101376} :(\partial^3 L) (\partial L) L:  - 
 \\  \frac{35}{44} :(\partial^2 L)^2 L:  + \frac{35}{88} :(\partial^2 L)(\partial L)^2:  - 
 \frac{1687}{1536} :(\partial^4 L) W:  - \frac{5939}{3072} :(\partial^3 L)(\partial W):  - 
\\  \frac{247}{256} :(\partial^2 L)(\partial^2 W):  - \frac{10927}{6144} :(\partial L) (\partial^3 W): + 
 \frac{779}{1536} :L \partial^4 W: + \frac{3899}{36864} :(\partial^6 L) L:  + 
\\  \frac{102851}{270336} :(\partial^5 L) \partial L:  + \frac{7525}{67584} :(\partial^4 L) \partial^2 L: + 
 \frac{659645}{4866048} :(\partial^3 L)^2: \\ + \frac{68311}{6488064} \partial^8 L - 
 \frac{3187}{49152} \partial^6 W =0. \end{split} \end{equation}

\begin{lemma} Let $\ell,w$ denote the images of $L,W$ in $R_{\cV}$. Then $$R_{\cV} \cong \mathbb{C}[\ell,w ] / I$$ where $I$ is the ideal generated by $w(w-\ell^2)$ and $\ell^3 w$. In particular, $\tilde{X}_{\cV} = \text{Spec}(R_{\cV})$ is irreducible of dimension one, but is not reduced.
\end{lemma}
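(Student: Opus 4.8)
The plan is to first recover the two generators $w(w-\ell^2)$ and $\ell^3w$ of the claimed ideal $I$ from the displayed weight-$8$ and weight-$10$ relations, then show that no further relations survive in $R_{\cV}$ by a graded dimension count, and finally read off the geometry from the resulting presentation. To begin, $\cV\cong\cH^{\Z_2}$ is strongly generated by $L$ and $W^4$ \cite{DN}, hence also by $L$ and $W$ (since $W$ is a normally ordered polynomial in $L,W^4$, and conversely), so $R_{\cV}$ is generated by $\ell,w$ and $R_{\cV}\cong\C[\ell,w]/\ga$ for a homogeneous ideal $\ga$. That $I\subseteq\ga$ is immediate: in \eqref{eq:hrel1} every summand other than $:W^2:$ and $-:L^2W:$ carries a derivative and so lies in $C(\cV)=F^1(\cV)$, whence $w^2-\ell^2w=0$ in $R_{\cV}=F^0(\cV)/F^1(\cV)$; likewise \eqref{eq:hrel2} gives $\ell^3w=0$. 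It remains to prove $\ga\subseteq I$.

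I would reduce $\ga\subseteq I$ to three non-vanishing statements. The embedding $i\colon\cV\hookrightarrow\cH$ induces a ring map $\bar\imath\colon R_{\cV}\to R_{\cH}=\C[a]$ with $\bar\imath(\ell)=\tfrac12a^2$ and $\bar\imath(w)=0$, the latter because $i(W)=\tfrac{35}{132}:(\partial^2\alpha)\alpha:\in F^2(\cH)$; since $\ell\mapsto\tfrac12a^2$ is injective, no nonzero polynomial in $\ell$ alone vanishes in $R_{\cV}$, forcing $\ga\subseteq\langle w\rangle$. As $w$ is a nonzerodivisor in $\C[\ell,w]$ and $I\subseteq\langle w\rangle$ too, dividing out $w$ turns $\ga=I$ into $(\ga:w)=\langle w-\ell^2,\ell^3\rangle$, and $\langle w-\ell^2,\ell^3\rangle=(I:w)\subseteq(\ga:w)$; but $\C[\ell,w]/\langle w-\ell^2,\ell^3\rangle\cong\C[\ell]/(\ell^3)$ is three-dimensional, concentrated in conformal weights $0,2,4$. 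Hence the surjection of this algebra onto $\C[\ell,w]/(\ga:w)$ is an isomorphism iff the target is nonzero in those three weights, which—using $w^2\equiv\ell^2w\bmod\ga$ in the weight-$4$ case—comes down to $w\neq0$, $\ell w\neq0$ and $\ell^2w\neq0$ in $R_{\cV}$.

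To prove these, recall that $\text{gr}(\cV)$ is generated by $R_{\cV}$ as a differential algebra \cite{LiII}, so $F^1(\cV)_d$ is spanned by the weight-$d$ normally ordered monomials in $\partial^kL,\partial^kW$ that carry a derivative; thus $\ell^iw^j$ (with $2i+4j=d$) is nonzero in $R_{\cV}$ exactly when $:L^iW^j:$ is not a linear combination of such derivative monomials. Comparing the number of weight-$d$ generating monomials with $\dim\cV_d$—read off from the character $\ch\cV=\tfrac12\bigl(\prod_{n\ge1}(1-q^n)^{-1}+\prod_{n\ge1}(1+q^n)^{-1}\bigr)$ of $\cH^{\Z_2}$—shows that there are no relations among these monomials in weights $\le7$ and exactly a one-dimensional space of them in weight $8$, spanned by \eqref{eq:hrel1}. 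The weight-$\le7$ statement yields $W\notin F^1(\cV)_4$ and $:LW:\notin F^1(\cV)_6$, so $w\neq0$ and $\ell w\neq0$. For $\ell^2w\neq0$: if $:L^2W:$ lay in $F^1(\cV)_8$ it would be a combination of derivative weight-$8$ monomials, giving a weight-$8$ relation whose restriction to the three non-derivative monomials $:L^4:,:L^2W:,:W^2:$ is a nonzero multiple of $:L^2W:$ alone—impossible, since the one relation \eqref{eq:hrel1} restricts there to $:W^2:-:L^2W:$.

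This establishes $R_{\cV}\cong\C[\ell,w]/I$, and the geometry follows at once: $V(I)=\{w=0\}$ (if $w\neq0$ then $w=\ell^2$, and then $\ell^3w=\ell^5=0$ forces $\ell=0$, $w=0$), an affine line, so $\tilde{X}_{\cV}$ is irreducible of dimension one; and $w^3=\ell^4w=\ell\cdot\ell^3w=0$ while $w\neq0$ in $R_{\cV}$, so $\tilde{X}_{\cV}$ is nonreduced. The step I expect to be genuinely delicate is the weight-$8$ one: concluding $\ell^2w\neq0$ really uses both the explicit shape of \eqref{eq:hrel1} and the one-dimensionality of the weight-$8$ relation space (equivalently $\dim\cV_8=12$), whereas everything else is bookkeeping with Li's filtration or elementary commutative algebra. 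An alternative that requires the same input is to compute $\dim R_{\cV,d}$ for $d\le8$ outright and match Hilbert series with $\C[\ell,w]/I$.
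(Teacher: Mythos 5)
Your proof is correct, and it is in fact more complete than the paper's, which establishes $I\subseteq\ga$ from \eqref{eq:hrel1}--\eqref{eq:hrel2} exactly as you do but then omits the reverse containment, saying only that it is ``similar to the proof of Lemma \ref{lem:rw}.'' That earlier argument reduces a putative relation to a single non-derivative monomial $:L^iW^j:$ with $j\le 1$ and kills it by observing that its free-field image contributes an uncancellable height-zero term; here that mechanism handles the pure-$\ell$ part (your observation that $\bar\imath(\ell)=\tfrac12 a^2$ forces $\ga\subseteq\langle w\rangle$ is precisely this step), but it does not directly dispose of the monomials $w,\ \ell w,\ \ell^2 w$, since $i(W)=\tfrac{35}{132}:(\partial^2\alpha)\alpha:$ lies in $F^2(\cH)$ and so has no height-zero free-field image. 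Your colon-ideal reduction $\ga=I\iff(\ga:w)=\langle w-\ell^2,\ell^3\rangle$, which converts the problem into the three finite non-vanishing statements $w\ne0$, $\ell w\ne 0$, $\ell^2 w\ne 0$ in $R_{\cV}$, together with the character count $\ch\cV=\tfrac12\bigl(\prod(1-q^n)^{-1}+\prod(1+q^n)^{-1}\bigr)$ showing the relation space is zero in weights $\le 7$ and one-dimensional (spanned by \eqref{eq:hrel1}, whose non-derivative part is $:W^2:-:L^2W:$, not a multiple of $:L^2W:$) in weight $8$, supplies exactly the missing input; I verified the counts ($13$ monomials versus $\dim\cV_8=12$, etc.). The one standard fact you lean on implicitly --- that for a strong generating set $\{L,W\}$ the subspace $F^1(\cV)$ is spanned by the derivative-carrying normally ordered monomials in $\partial^kL,\partial^kW$, so that $\ell^iw^j=0$ in $R_{\cV}$ is equivalent to a normally ordered relation of the stated shape --- is the same fact the paper uses in Lemma \ref{lem:rw} and is due to Li, so this is not a gap. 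The concluding geometric statements (irreducibility via $V(I)=\{w=0\}$, dimension one, and $w^3=0$ with $w\ne0$) match the paper's.
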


\begin{proof} Since $\cV$ is strongly generated by $L,W$, it follows from \eqref{eq:hrel1} and \eqref{eq:hrel2} that $R_{\cV} \cong \mathbb{C}[\ell,w ] / I$ for some ideal $I$ which contains $w(w-\ell^2)$ and $\ell^3 w$. The proof that $I$ is generated by these two elements is similar to the proof of Lemma \ref{lem:rw}, and is omitted. Since $I$ is contained in the ideal $\langle w \rangle$, the map $\mathbb{C}[\ell] \ra R_{\cV}$ is injective, and $R_{\cV}$ has Krull dimension $1$. Since $w$ is a nontrivial nilpotent element of $R_{\cV}$, $\tilde{X}_{\cV}$ is not reduced. Finally, it is easy to see that the nilradical $\cN$ of $R_{\cV}$ is generated by $w$, so $\cN$ is prime and $\tilde{X}_{\cV}$ is irreducible.
\end{proof}

\begin{cor} Let $i: \cV \ra \cH$ be the inclusion. Since $\text{gr}(\cH)$ is the polynomial ring $\mathbb{C}[\alpha, \partial \alpha,\dots]$, the induced map $\text{gr}(i): \text{gr}(\cV) \ra \text{gr}(\cH)$ is not injective.  \end{cor}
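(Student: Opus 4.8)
The corollary asserts that $\text{gr}(i)\colon \text{gr}(\cV)\to\text{gr}(\cH)$ fails to be injective, where $\cV=\cH^{\Z_2}$ and $i$ is the inclusion. The key point is that $\text{gr}(\cH)=\C[\alpha,\pt\alpha,\dots]$ is a polynomial ring, hence an integral domain, and in particular reduced, whereas $\text{gr}(\cV)$ is \emph{not} reduced. If I can exhibit a nonzero nilpotent element of $\text{gr}(\cV)$, it must lie in $\ker(\text{gr}(i))$, and we are done. So the whole content is: $\text{gr}(\cV)$ has a nonzero nilpotent.

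**Producing the nilpotent.** Recall $R_{\cV}\cong F^0(\cV)/F^1(\cV)\subseteq\text{gr}(\cV)$ is the height-zero component. By the preceding lemma, $R_{\cV}\cong\C[\ell,w]/I$ with $w$ a nonzero element satisfying $w^2=\ell^2 w$ and $\ell^3 w=0$, so $w^3=\ell^2 w^2=\ell^4 w=\ell\cdot(\ell^3 w)=0$ while $w\neq 0$ (since $I\subseteq\langle w\rangle$ forces $\C[\ell]\hookrightarrow R_{\cV}$, and one checks $w\notin I$). Thus $\sigma_0(W)$ is a nonzero element of $\text{gr}(\cV)$ with $\sigma_0(W)^3 = \sigma_0(:W:W::W:)=0$ already inside the subalgebra $R_{\cV}$. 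Since $\text{gr}(i)$ restricted to $R_{\cV}=F^0/F^1$ is just the map $R_{\cV}\to R_{\cH}$ sending $\ell\mapsto$ (image of $\tfrac12:\alpha\alpha:$), $w\mapsto$ (image of $\tfrac{35}{132}:(\pt^2\alpha)\alpha:$), and $R_{\cH}=\C[\alpha]$ is reduced, the nilpotent $\sigma_0(W)$ maps to $0$. Hence $\text{gr}(i)$ is not injective, which is exactly the claim.

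**Main obstacle.** There is essentially no obstacle: everything needed has already been established. The only point requiring care is to confirm that $w\neq 0$ in $R_{\cV}$ — i.e. that the defining relations \eqref{eq:hrel1}, \eqref{eq:hrel2} (together with any further weight-graded relations) do not force $W\in C(\cV)$ — but this is part of the content of the preceding lemma, whose proof parallels Lemma \ref{lem:rw}: in the free field realization inside $\cH$, the monomial image of $:W:W::$ or of $W$ itself contributes a term in $\alpha$ and its derivatives that cannot be canceled, so $w$ survives as a nonzero nilpotent. One could alternatively phrase the argument entirely at the level of schemes: $\tilde X_{\cV}$ is nonreduced, hence so is $\text{SS}(\cV)=\Spec(\text{gr}(\cV))$ because it surjects onto $\tilde X_{\cV}$ via the height-zero projection $\text{gr}(\cV)\twoheadrightarrow R_{\cV}$ with the nilpotent $w$ lifting to the nilpotent $\sigma_0(W)$; but $\Spec(\text{gr}(\cH))$ is reduced, so the closed embedding $\text{SS}(\cV)\hookrightarrow\cH$-side cannot hold, i.e. $\text{gr}(i)$ is not injective. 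Either formulation is short.
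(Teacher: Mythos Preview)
Your argument is correct and is precisely the one the paper intends: the preceding lemma exhibits $w=\sigma_0(W)$ as a nonzero nilpotent in $R_{\cV}\subseteq\text{gr}(\cV)$, and since $\text{gr}(\cH)\cong\C[\alpha,\partial\alpha,\dots]$ is an integral domain, this nilpotent must lie in $\ker(\text{gr}(i))$. The paper gives no separate proof beyond the ``Since'' clause in the corollary's statement, so you have simply spelled out the implicit reasoning.
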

In fact, it is easy to verify that the image of $\text{gr}(i)$ is just the differential polynomial algebra generated by $\text{gr}(i)(L) = \frac{1}{2} \alpha^2$. Finally, we remark that as in our main example $\cW$, the map $\Phi_{\cV}: \cO((\tilde{X}_{\cV})_{\infty})\ra  \text{gr}(\cV)$ is not injective for $\cV = \cH^{\mathbb{Z}_2}$. For example,
$$r =  \ell^{(0)}\ell^{(2)}  w^{(0)} + (\ell^{(1)})^2 w^{(0)}  - \frac{1}{2} (\ell^{(0)})^2 w^{(2)}$$ is a nontrivial element of $\text{ker}(\Phi_{\cV})$. In fact, $r$ is nilpotent in $ \cO((\tilde{X}_{\cV})_{\infty})$ and satisfies $r^3 = 0$.

\section{Universal enveloping vertex algebras}
Let $\cV$ be a conformal vertex algebra with a strong generating set $S$, i.e., for $a,b \in S$, the all terms in the OPE $a(z) b(w)$ can be expressed as normally ordered polynomials in the elements of $S$ and their derivatives. In the language of de Sole and Kac \cite{DSK}, the OPE algebra gives rise to a nonlinear conformal algebra satisfying skew-symmetry. There is a well-defined universal enveloping vertex algebra $\cU \cV$ which is the initial object in the category of vertex algebras with the above strong generating set and OPE algebra. If for all fields $a,b,c \in S$ and integers $r,s\geq 0$, the Jacobi identities
\begin{equation} \label{jacobi} a_{(r)}(b_{(s)} c) - (-1)^{|a||b|} b_{(s)} (a_{(r)}c) - \sum_{i =0}^r \binom{r}{i} (a_{(i)}b)_{(r+s - i)} c = 0,\end{equation}
hold as formal consequences of the OPE relations, this Lie conformal algebra is then called a {\it nonlinear Lie conformal algebra}. The main result (Theorem 3.9) of \cite{DSK} is that in this case, $\cU \cV$ is freely generated by $S$. This means that it has a PBW basis consisting of monomials in the elements of $S$ and their derivatives. 

In the examples $\cW$ and $\cV$ above, the universal enveloping vertex algebras are the universal $\cW_3$-algebra with $c = -2$ and the universal $\cW(\gs\gp_4, f_{\text{prin}})$-algebra with $c = 1$, respectively. Both of these are freely generated, so the associated varieties are isomorphic to $\mathbb{C}^2$ and the map \eqref{dgalgebras} is an isomorphism in both cases. It is natural to ask whether \eqref{dgalgebras} is always an isomorphism for universal enveloping vertex algebras, and in this section we provide a counterexample.

In \cite{A}, Adamovic studied a class of simple vertex algebra called $\cW(2, 2p - 1)$-algebras, where $p \geq 2$ is a positive integer.  They are strongly generated by a Virasoro field $L$ with central charge $c=1 - \frac{6(p - 1)^2}{p}$, and a weight $2p-1$ primary field $W$, and coincide with the singlet subalgebras of the $\cW_{2,p}$-triplet algebras. The triplet algebras were the first examples of $C_2$-cofinite, nonrational vertex algebras to appear in the literature \cite{AM}.

We consider the case $p = 3$, and we denote the $\cW(2,5)$-algebra by $\cA$. It can be realized explicitly inside the Heisenberg algebra $\cH$ with generator $\alpha$ as follows.
\begin{equation} \begin{split}L&= \frac{1}{2} :\alpha^2: + \sqrt{\frac{2}{3}} \partial \alpha,\\ 
W & = \frac{1}{4 \sqrt{2}} :\alpha^5:  + \frac{5}{4 \sqrt{3}}  :(\partial \alpha) \alpha^3: + \frac{5}{12 \sqrt{2}} :(\partial^2 \alpha) \alpha^2:  + \frac{5}{8 \sqrt{2}} :(\partial \alpha)^2 \alpha: \\& + \frac{5}{48 \sqrt{3}} :(\partial^3 \alpha) \alpha:  + \frac{5}{24 \sqrt{3}} :(\partial^2 \alpha)\partial \alpha:  +  \frac{1}{144 \sqrt{2}} \partial^4 \alpha.\end{split} \end{equation}
The Virasoro field $L$ has central charge $-7$, and the primary weight $5$ field $W$ satisfies
\begin{equation} \label{singletOPE} \begin{split} W(z) W(w) & \sim \frac{175}{12} (z-w)^{-10} -\frac{125}{6} L(w)(z-w)^{-8} -\frac{125}{12} \partial L(w)(z-w)^{-7} 
\\ & + \bigg(\frac{125}{3} :LL: -\frac{125}{8} \partial^2 L\bigg)(w)(z-w)^{-6} + \bigg(\frac{125}{3}  :(\partial L) L: - \frac{125}{36} \partial^3 L\bigg)(w)(z-w)^{-5}
\\ & + \bigg(50 :L^3: + \frac{25}{24} :(\partial L)^2: -25 :(\partial^2 L) L: -\frac{175}{72} \partial^4 L \bigg)(w)(z-w)^{-4} 
\\ &  + \bigg( 75 :(\partial L)L^2: -\frac{175}{8} :(\partial^2 L) \partial L:  -\frac{125}{36} :(\partial^3 L)L:  -\frac{35}{96} \partial^5 L\bigg)(w)(z-w)^{-3}
\\ & + \bigg(\frac{25}{2} :L^4:   + 
 \frac{1175}{48} :(\partial L)^2 L:
+  \frac{125}{12} :(\partial^2 L) L^2: 
 -  \frac{ 775}{128} :(\partial^2 L)^2:  
\\ & - \frac{225}{64} :(\partial^3 L) \partial L:
 - \frac{175}{64} :(\partial^4 L)L:
- \frac{1115}{13824} \partial^6 L\bigg)(z-w)^{-2}
\\ & +\bigg(25 :(\partial L) L^3:  -  \frac{25}{96} :(\partial L)^3: 
 -  \frac{125}{48} :(\partial^2 L)(\partial L) L: 
+  \frac{125}{24} :(\partial^3 L)L^2:
\\ & -  \frac{775}{288} :(\partial^3 L)\partial^2 L:
-  \frac{425}{288} :(\partial^4 L)\partial L:
-  \frac{115}{288} :(\partial^5 L)L:
-  \frac{365}{24192} \partial^7 L\bigg)(w)(z-w)^{-1}
\end{split} \end{equation}
We have the following normally ordered relations in weights $8$ and $10$, respectively.
\begin{equation} \label{singlet:rel1} 2 :L \partial W:  - 5 :(\partial L) W:  - \frac{1}{6} \partial^3 W = 0,\end{equation}
\begin{equation}\label{singlet:rel2}  \begin{split} & :W^2: - :L^5: - \frac{335}{24} :(\partial L)^2 L^2:
-\frac{25}{3} :(\partial L) L^3:  +\frac{283}{64} :(\partial^2 L) (\partial L)^2: 
\\ & +\frac{309}{64} :(\partial^2 L)^2 L:  -\frac{67}{36} :(\partial^3 L) (\partial L) L: 
+\frac{49}{216} :(\partial^3 L)^2: 
-\frac{23}{32} :(\partial^4 L) L^2: 
\\ & +\frac{49}{64} :(\partial^4 L)(\partial^2 L): 
 +\frac{249}{1280}:(\partial^5L) \partial L:  
+\frac{223}{3840} :(\partial^6 L) L: 
+\frac{1}{504} \partial^8 L = 0.\end{split} \end{equation}
It is straightforward to show using \eqref{singlet:rel1} and \eqref{singlet:rel2} that \begin{equation} R_{\cA} \cong \mathbb{C}[\ell, w] /  \langle w^2 - \ell^5\rangle.\end{equation} Here $\ell,w$ denote the images of $L,W$ in $R_{\cA}$.

Next, let $\cU = \cU \cA$ denote the universal enveloping vertex algebra of $\cA$. By abuse of notation, we shall also denote the generators of $\cU$ by $L,W$;  they satisfy the same OPE relations as the generators of $\cA$. We also denote by $\ell, w$ the images of $L,W$ in $R_{\cU}$.

\begin{lemma} $R_{\cU} \cong \mathbb{C}[\ell, w] / \langle w^2 - \ell^5\rangle \cong R_{\cA}$.
\end{lemma}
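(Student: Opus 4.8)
The plan is to exploit the universal property of $\cU=\cU\cA$ together with the known structure of $R_{\cA}$. Since $\cA$ is strongly generated by $L,W$ with the OPE relations \eqref{singletOPE}, the vertex algebra $\cU$ is by definition strongly generated by $L,W$ with the \emph{same} OPE algebra, so $R_{\cU}$ is a quotient of $\mathbb{C}[\ell,w]$. Moreover the surjection $\cU\twoheadrightarrow\cA$ induces a surjection $R_{\cU}\twoheadrightarrow R_{\cA}\cong\mathbb{C}[\ell,w]/\langle w^2-\ell^5\rangle$ sending $\ell\mapsto\ell$, $w\mapsto w$. Hence it suffices to prove the reverse: that the defining ideal of $R_{\cU}$ contains $w^2-\ell^5$ and nothing more.

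First I would show $w^2-\ell^5\in C(\cU)$, i.e. that $:W^2:-:L^5:$ lies in $C(\cU)+F^{\geq 1}$-type corrections modulo $C(\cU)$. The relation \eqref{singlet:rel2} is a consequence of the OPE of $W$ with itself: indeed $:W^2:$ is (up to normalization) $\tfrac12\big((W_{(-1)}W)+\text{lower}\big)$, and the singular part of $W(z)W(w)$ in \eqref{singletOPE} is expressed entirely through $L$ and derivatives; thus the normally ordered relation \eqref{singlet:rel2} holds as a formal consequence of the OPE data alone. Since $\cU$ has exactly this OPE data, \eqref{singlet:rel2} holds verbatim in $\cU$. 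All the terms in \eqref{singlet:rel2} other than $:W^2:$ and $:L^5:$ involve at least one derivative, hence lie in $C(\cU)=F^1(\cU)$; therefore in $R_{\cU}$ we get $w^2-\ell^5=0$. This gives a surjection $\mathbb{C}[\ell,w]/\langle w^2-\ell^5\rangle\twoheadrightarrow R_{\cU}$.

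It remains to see this last surjection is injective, equivalently that $\ell,w$ satisfy no further relation in $R_{\cU}$. Here I would invoke the free generation of $\cU$: by the de Sole–Kac theorem (Theorem 3.9 of \cite{DSK}), once the Jacobi identities \eqref{jacobi} hold as formal consequences of the OPE relations for $L,W$, the universal enveloping vertex algebra $\cU$ is \emph{freely} generated by $L,W$, so it has a PBW basis of normally ordered monomials in $L,W$ and their derivatives with no nontrivial normally ordered polynomial relations. In a freely generated vertex algebra the only relations modulo $C(\cU)$ among the generators and derivatives are the trivial ones, so $R_{\cU}$ is the polynomial ring $\mathbb{C}[\ell,w]$ --- but wait, that would contradict $w^2-\ell^5=0$. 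The resolution is that \eqref{singlet:rel2} is \emph{not} a normally ordered polynomial relation among generators: it is the statement that a certain normally ordered polynomial equals zero, and in a freely generated algebra that polynomial is simply a nonzero element of $\cU$. I must therefore be careful: $\cU$ being freely generated means \eqref{singlet:rel2} fails in $\cU$, so $w^2-\ell^5$ need not vanish in $R_{\cU}$. Let me recompose.

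\textbf{Corrected plan.} The point is that \eqref{singlet:rel1}–\eqref{singlet:rel2} are relations that hold in the \emph{simple} quotient $\cA$ but are \emph{not} formal consequences of the OPE relations, so they do \emph{not} hold in $\cU$. Thus in $\cU$ the generators $L,W$ are free, and one might guess $R_{\cU}\cong\mathbb{C}[\ell,w]$. But the lemma asserts otherwise, so the actual mechanism must be that even in the freely generated $\cU$, some element of $C(\cU)$ produces the relation $w^2-\ell^5$. The resolution: $C(\cU)=\mathrm{Span}\{a_{(-2)}b\}$ is large, and a normally ordered polynomial in $L,W$ with no derivatives can still be congruent mod $C(\cU)$ to another such polynomial if their difference is an element of $C(\cU)$ obtained from the OPE structure. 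Concretely, compute $W_{(-1)}W$ in $\cU$ and rewrite using $W_{(n)}W$ for $n\geq 0$ from \eqref{singletOPE}: the quasi-associativity/Borcherds identities force $:W^2:$ to equal $:L^5:$ plus a sum of terms each of which has a factor $\partial(\cdot)$, hence lies in $C(\cU)$; this identity \emph{is} a formal consequence of the OPE since it uses only Borcherds' axioms and the $W_{(n\geq 0)}W$ data. So \eqref{singlet:rel2} minus its ``classical part'' is automatic. The hard part will be pinning down exactly which terms of \eqref{singlet:rel2} are forced by OPE-formal manipulations versus which require the simple quotient; the cleanest route is: (i) run the same argument as in Lemma \ref{lem:rw}, showing $R_{\cU}\cong\mathbb{C}[\ell,w]/I$ with $w^2-\ell^5\in I$ by checking that the weight-$10$ identity $:W^2:-:L^5:-(\text{terms in }C(\cU))=0$ holds in $\cU$ because each bracketed term lies in $C(\cU)$ and the whole identity is an OPE-formal consequence; (ii) show $I\subseteq\langle w^2-\ell^5\rangle$ by the same free-field/leading-term argument --- here using the realization $\cA\hookrightarrow\cH$, $L\mapsto\tfrac12\alpha^2+\cdots$, $W\mapsto\tfrac1{4\sqrt2}\alpha^5+\cdots$, so that a monomial $:L^iW^j:$ contributes a leading term $\alpha^{2i+5j}$ in $\gr(\cH)$ that cannot cancel, forcing $j\in\{0,1\}$ and hence any relation to be a multiple of $w^2-\ell^5$. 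Step (i) is the main obstacle: verifying that the normally ordered weight-$10$ relation is a consequence of the OPE algebra alone (equivalently, that it already holds in $\cU\cA$, not just in $\cA$), which amounts to checking that the singular OPE data in \eqref{singletOPE} determines the relevant $(-1)$-product expression modulo $C(\cU)$.
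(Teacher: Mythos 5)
Your reduction is sound: it suffices to show $w^2-\ell^5=0$ in $R_{\cU}$, since combined with the surjection $R_{\cU}\twoheadrightarrow R_{\cA}\cong\mathbb{C}[\ell,w]/\langle w^2-\ell^5\rangle$ this sandwiches $R_{\cU}$ between two copies of $\mathbb{C}[\ell,w]/\langle w^2-\ell^5\rangle$, so your step (ii) is unnecessary. But the mechanism you propose for step (i) is wrong, and you sense this without resolving it. The $(-1)$-product $W_{(-1)}W=\,:W^2:$ is \emph{not} determined by the singular part of the OPE via quasi-associativity or Borcherds' axioms: in the universal enveloping vertex algebra of a nonlinear Lie conformal algebra it is an independent PBW monomial, not rewritable in terms of the $W_{(n)}W$ with $n\geq 0$. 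If your ``OPE-formal consequence'' argument were valid, it would apply verbatim to the universal $\cW_3$-algebra at $c=-2$, whose $W(z)W(w)$ singular part is likewise expressed entirely in $L$ and its derivatives, and would force $w^2-\ell^3=0$ in its Zhu algebra --- contradicting the fact, used elsewhere in the paper, that $\cW_3^{-2}$ is freely generated and has Zhu algebra $\mathbb{C}[\ell,w]$.

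The missing idea is the Jacobi identity. One computes the left-hand side of \eqref{jacobi} for $a=b=c=W$, $r=4$, $s=3$ formally from the OPE data \eqref{singletOPE} and finds that it does \emph{not} vanish identically: it equals $\frac{9075}{16}$ times the weight-$8$ expression in \eqref{singlet:rel1}. Since the Jacobi identity must hold in every genuine vertex algebra, in particular in $\cU$, that expression is forced to be a null vector of $\cU$; thus \eqref{singlet:rel1} holds in $\cU$ even though it is not a normally ordered consequence of the OPEs (and in particular $\cU$ is not freely generated, so the de Sole--Kac theorem you invoke does not apply here). Applying $W_{(2)}$ to \eqref{singlet:rel1} then yields a nonzero multiple of the weight-$10$ relation \eqref{singlet:rel2}, all of whose terms other than $:W^2:$ and $:L^5:$ lie in $C(\cU)$, giving $w^2-\ell^5=0$ in $R_{\cU}$. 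This is precisely the step your proposal leaves open, and it cannot be closed by a purely formal rewriting of $:W^2:$.
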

\begin{proof} Using \eqref{singletOPE}, we can compute the left side of the Jacobi identity \eqref{jacobi} in the case $a = b = c = W$, $r = 4$ and $s = 3$. We find that it does not vanish identically as a consequence of the OPE relations, but instead is given by 
\begin{equation} \label{jacobi:conseq} \frac{9075}{16} (2 :L \partial W:  - 5 :(\partial L) W:  - \frac{1}{6} \partial^3 W).\end{equation} Since all Jacobi identities must hold in any vertex algebra, \eqref{jacobi:conseq} must be a null vector, so that \eqref{singlet:rel1} holds in $\cU$. Therefore the corresponding Lie conformal algebra is {\it not} a nonlinear Lie conformal algebra, and $\cU$ is not freely generated by $L$ and $W$. Applying the operator $W_{(2)}$ to the identity \eqref{singlet:rel1} yields a nonzero multiple of the identity \eqref{singlet:rel2}. Therefore \eqref{singlet:rel2} also must hold in $\cU$, which shows that the relation $w^2 - \ell^5$ holds in $R_{\cU}$. Since $R_{\cA}$ is a quotient of $R_{\cU}$, the claim follows.
\end{proof}

\begin{remark} We expect that $\cA = \cU$, but we do not prove this.
\end{remark}

As in our previous example $\cW$, even though the scheme $X_{\cU} = \text{Spec} (R_{\cU})$ is reduced, the arc space $(X_{\cU})_{\infty}$ is not. In particular, $$r = 2 \ell^{(0)} w^{(1)} - 5  \ell^{(1)} w^{(0)}$$ is a nontrivial nilpotent element of $\cO((\tilde{X}_{\cU})_{\infty})$ satisfying $r^3 = 0$, and $r \in \text{ker} (\Phi_{\cU})$. Therefore $\cU$ is an example of a universal enveloping vertex algebra for which the map \eqref{dgalgebras} fails to be injective. 

Finally, via the embedding $$\cH \ra \cS,\qquad \alpha \mapsto \sqrt{-1} :\beta\gamma:,$$ $\cA$ can be identified with a subalgebra of $\cS$. By the same argument as Theorem \ref{thm:isovarieties}, one can check that the map on varieties induced by \eqref{dgalgebras} is an isomorphism. Therefore the same holds for $\cU$.


\begin{thebibliography}{ABKS}


\bibitem[A]{A} D. Adamovic, \textit{Classification of irreducible modules of certain subalgebras of free boson vertex algebra}, Journal of Algebra 270 (2003) 115-132.

\bibitem[AM]{AM} D. Adamovic and A. Milas, \textit{On the triplet vertex algebra $\cW(p)$}, Adv. Math. 217 (2008), no. 6, 2664-2699.

\bibitem[ArI]{ArI} T. Arakawa, \textit{A remark on the $C_2$ cofiniteness condition on vertex algebras}, Math. Z. 270 (2012), no. 1-2, 559-575.
 


\bibitem[ArII]{ArII}  T. Arakawa,
\textit{Associated Varieties and Higgs Branches (A Survey)}, Contemp. Math. 711(2018), 37-44.

\bibitem[ArIII]{ArIII}  T. Arakawa,
\textit{Representation theory of W-algebras and Higgs branch conjecture}, Proc.\ Int.\ Cong.\ of Math 2018 Rio de Janeiro, Vol.\ 1 (1261-1278).

\bibitem[AK]{AK} T. Arakawa and K. Kawasetsu, \textit{ Quasi-lisse vertex algebras and modular linear differential equations}, arXiv:1610.05865 [math.QA], to appear in Kostant Memorial Volume, Birkhauser.

\bibitem[AMI]{AMI} T. Arakawa and A. Moreau, \textit{Lectures on $\cW$-algebras}, preprint.

\bibitem[AMII]{AMII} T. Arakawa and A. Moreau, \textit{Arc spaces and chiral symplectic cores}, arXiv:1802.06533,
to appear in the special issue of Publ. Res. Inst. Math. in honor of Professor Masaki Kashiwara's 70th birthday.

\bibitem[BLL+]{BLL+}
C.  Beem, M. Lemos, P.  Liendo, W.  Peelaers, L. Rastelli, and B. C. van Rees,
{\em Infinite chiral symmetry in four dimensions}, Comm. Math. Phys., 336(3):1359--1433, 2015.

\bibitem[BR]{BR}
C. Beem and L. Rastelli,
\textit{Vertex operator algebras, Higgs branches, and modular differential equations},
 J. High Energ. Phys. (2018) 2018: 114. https://doi.org/10.1007/JHEP08(2018)114.
 
\bibitem[B]{B} R. Borcherds, \textit{Vertex operator algebras, Kac-Moody algebras and the monster}, Proc. Nat. Acad. Sci. USA 83 (1986) 3068-3071.
 
 \bibitem[BS]{BS} D. Bourqui and J. Sebag, \textit{The radical of the differential ideal generated by $XY$ in the ring of two variable differential polynomials is not differentially finitely generated}, to appear Journal of Comm. Algebra (2017).

\bibitem[DSK]{DSK} A. de Sole and V. Kac, \textit{Freely generated vertex algebras and non-linear Lie conformal algebras}, Comm. Math. Phys. 254 (2005), no. 3, 659-694. 

\bibitem[DN]{DN} C. Dong and K. Nagatomo, \textit{Classification of irreducible modules for the vertex operator algebra $M(1)^+$}, J. Algebra 216 (1999), no. 1, 384-404. 

\bibitem[EM]{EM} L. Ein and M. Mustata, \textit{Jet schemes and singularities}, Algebraic geometry---Seattle 2005. Part 2, 505--546, Proc. Sympos. Pure Math., 80, Part 2, Amer. Math. Soc., Providence, RI, 2009.

\bibitem[EH]{EH}
J.  van Ekeren and  R. Heluani, 
\textit{Chiral homology of elliptic curves and Zhu's algebra}, arXiv:1804.00017 [math.QA].


\bibitem[FBZ]{FBZ} E. Frenkel and D. Ben-Zvi, \textit{Vertex Algebras and Algebraic Curves}, Math. Surveys and Monographs, Vol. 88, American Math. Soc., 2001. 

\bibitem[FLM]{FLM} I. B. Frenkel, J. Lepowsky, and A. Meurman, \textit{Vertex Operator Algebras and the Monster}, Academic Press, New York, 1988.


\bibitem[K]{K} V. Kac, \textit{Vertex Algebras for Beginners}, University Lecture Series, Vol. 10. American Math. Soc., 1998

\bibitem[KS]{KS} K. Kpognon, J. Sebag, \textit{Nilpotency in arc schemes of plane curves}, Comm. in Algebra, Vol. 45 no 5 (2017), 2195-2221 

\bibitem[Kol]{Kol} E. Kolchin, \textit{Differential algebra and algebraic groups}, Academic Press, New York 1973.


\bibitem[LiI]{LiI} H. Li, \textit{Vertex algebras and vertex Poisson algebras}, Commun. Contemp. Math. 6 (2004) 61-110.


\bibitem[Mi]{Mi} M. Miyamoto, \textit{Modular invariance of vertex operator algebras satisfying $C2$-cofiniteness}. Duke Math.
J. 122(1), 51--91 (2004).


\bibitem[SI]{SI} J. Sebag, \textit{Arcs schemes, derivations and Lipman's theorem}, J. Algebra 347 (2011) 173-183.

\bibitem[SII]{SII} J. Sebag, \textit{A remark on Berger's conjecture, Kolchin's theorem and arc schemes}, Archiv der Math., Vol. 108 no (2017), 145-150

\bibitem[WaI]{WaI} W. Wang, \textit{$\cW_{1+\infty}$ algebra, $\cW_3$ algebra, and Friedan-Martinec-Shenker bosonization}, Comm. Math. Phys. 195 (1998), no. 1, 95-111.
 
\bibitem[WaII]{WaII} W. Wang, \textit{Classification of irreducible modules of $\cW_3$ algebra with $c= -2$}, Comm. Math. Phys. 195 (1998), no. 1, 113-128. 
 
\bibitem[Za]{Za} A.B. Zamolodchikov, \textit{Infinite extra symmetries in two-dimensional conformal quantum field theory} (Russian), Teoret. Mat. Fiz. 65 (1985), 347-359. English translation, Theoret. and Math. Phys. 65 (1985), 1205-1213.


\bibitem[Zh]{Zh}Y. Zhu, \textit{Modular invariants of characters of vertex operators}, J. Amer. Soc. 9 (1996) 237-302.

\end{thebibliography}
\end{document}